\def\R{{\mathbb R}}
\def\C{{\mathbb C}}
\def\N{{\mathbb N}}
\def\G{{\mathbb G}}
\def\ZZ{{\mathcal  Z}}
\def\Id{{\mathbb I}}
\def\l{\langle}
\def\I{{\mathcal I}}
\def\B{{\mathcal B}}
\def\CC{{\mathcal C}}
\def\D{{\mathcal D}}
\def\H{{\mathcal H}}
\def\M{{\mathcal M}}
\def\S{{\mathcal S}}
\def\a{{\frak a}}
\def\g{{\frak g}}
\def\cc{{\frak c}}
\def\n{{\frak n}}
\def\p{{\frak p}}
\def\s{{\frak s}}
\def\al{\alpha}
\def\be{\beta}
\def\de{\delta}
\def\Om{\Omega}
\def\OM{\Omega}
\def\ga{\gamma}
\def\GA{\Gamma}
\def\ch{\chi}
\def\Si{\Sigma}
\def\va{\varphi}
\def\ps{\psi}
\def\ad{{\rm {ad}}}
\def\Ad{{\rm Ad}}
\def\ker{{\rm ker}}
\def\tr{{\rm tr}}
\def\Id{{\Bbb I}}
\def\inv{^{-1}}
\def\tr{{\text tr}}
\def\iy{\infty}
\def\ol#1{\overline{#1}}
\def\hb#1{\hbox{#1}}
\def\val#1{\vert #1\vert}
\def\no#1#2{\Vert #1\Vert_{#2} }
\def \nn{\nonumber}
\def\exp#1{\hb{exp}(#1)}
\def\ker#1{\hb{ker}(#1)}
\def\res#1{_{\vert #1}}
\def\inv{^{-1}}
\def\es{\emptyset}
\def\hb #1{\hbox{#1}}
\def\wh{\widehat}
\def\hb#1{\hbox{#1}}
\def\val#1{\vert #1\vert}
\def\ti{\times}
\def\ker#1{\hb{ker}(#1)}
\def\dim#1{{\rm dim}(#1)}
\def\l#1#2{L^{#1}{(#2)}}
\def\lef({\left(}
\def\rig){\right)}
\def\noi{\noindent}
\newtheorem{theorem}[subsection]{Theorem}
\newtheorem{remarks}[subsubsection]{Remarks}
\newtheorem{remark}[subsubsection]{Remark}
\newtheorem{definition}[subsubsection]{Definition}
\begin{document}

\title{A retract theorem for nilpotent Lie groups}

\author{Ying-Fen Lin, Jean Ludwig and Carine Molitor-Braun}


\date{}

\maketitle

\begin{abstract}
\footnote{keywords: nilpotent Lie group, irreducible
representation, co-adjoint orbit, Fourier inversion, retract,
compact group action -- 2010 Mathematics Subject Classification:
22E30, 22E27, 43A20} Let $G=\exp \g$ be a connected, simply connected nilpotent Lie
group. We show that for every $G $-invariant smooth sub-manifold $M $ of $\g^* $,  there exists an open relatively compact subset $\M $ of $M $ such that for any smooth adapted field of operators $(F(l))_{l\in M} $ supported in $G\cdot \M $ there exists a Schwartz function $f $ on $G $ such that $\pi_l(f)=op_{F(l)}$ for all $l\in M $.
This retract theorem can then be used to show that for every Lie group $\bf G $ of automorphisms of $G $ containing the inner automorphisms of $G $ with locally closed $\bf G$-orbits in $\g^* $,  the proper $\bf G$-prime two-sided closed ideals of $L^1(G)$ are the kernels of $\bf G $-orbits in $\widehat G $.
\end{abstract}

\section{Introduction}
For a connected, simply connected, nilpotent Lie group
$G$, the description of its  spectrum and of the  Fourier inversion theorem is due to Kirillov \cite{Kir}, who showed that the dual space $\wh G $ of $G $ is in one-to-one correspondence with the space $\g^*/G $ of co-adjoint orbits of $G $. R. Howe proved in \cite {Ho} that for every irreducible unitary representation $(\pi,\H_\pi) $ of $G $ and every smooth linear operator $a $ on $\H_\pi $ there exists a Schwartz function $f_a $ on $G $ such that $\pi(f_a)= a$.  He also showed that the mapping $a \mapsto f_a $ is linear and continuous with respect to the Fr\'echet topology of the space $\B^\iy(\H_{\pi}) $ of smooth linear operators on $\H_\pi $ and the  Fr\'echet space $\S(G) $ of Schwartz functions on $G $.

In this paper, we study a version of the Fourier inversion theorem for nilpotent Lie groups. More precisely, we generalise the result of R. Howe's  mentioned above by constructing a continuous retract from the space of adapted smooth kernel functions defined on  a smooth $G $-invariant sub-manifold $M$ of $\g^* $ and supported in a subset $G\cdot \M $ of $M $, where  $\M $ is a relatively compact open subset of $M $, into the space $\S(G) $.
We will prove this result, which we call the retract theorem, by proceeding an induction on the length $\val I $ of the largest index set $I $ for which $(\B\ti \g^*)_I\cap M\ne \es $. In order to do so we will apply the variable group techniques developed in \cite{Lu-Z}, which have already been used in \cite{Lu.Mu.}.

Once we have the retract theorem, we can apply it to study the $\bf G $-prime ideals of the Banach algebra $\l1G $.
Here $\bf G$ denotes a Lie subgroup of the automorphism group of $G $ with the property that the $\bf G $-orbits in $\g^* $ are all locally closed.
The retract theorem implies that the Schwartz functions contained  in the kernel of a $\bf G$-orbit $\Om $ in $\wh G $ are dense in the $\l1G $-kernel of $\Om $.
Using the methods in \cite{Lu.Mo.1}, it follows that every $\bf G$-prime ideal in $\l1G $ is the kernel of such an $\bf G$-orbit $\Om $.
One can for instance use this result for the study of bounded irreducible representations $(\pi,X) $ of a Lie group $\bf G $ on a Banach space $X $. Restricting  the representation $\pi $ to the nilradical $G $, one obtains the kernel $\ker {\pi\res G} $ of $\pi\res G $ in  the algebra $L^1(G) $. This ideal  $\ker {\pi\res G} $ is then $\bf G $-prime. If $\ker {\pi\res G} $ is  given as the kernel in $L^1(G) $ of a $\bf G $-orbit ${\bf G}\cdot \pi_0 \subset \wh G$ for some $\pi_0 \in \wh G$, then one use $\pi_0 $ to   make an  analysis of $\pi $ as Mackey did in the case of  unitary representations.

For connected, simply connected, nilpotent Lie
groups, J. Ludwig showed in \cite {Lu.} that the closed
prime ideals  of $L^1( G )$ coincide with the
kernels  of the irreducible unitary representations. In  $1984$, D. Poguntke studied the action
of an abelian compact group $K$ on a nilpotent Lie group \cite{P1} and characterised the $K $-prime ideals as kernels of $K $-orbits.
In \cite{La.Mo.1}, R. Lahiani and C. Molitor-Braun identified the $K$-prime ideals with hull contained in the generic part of the dual space of $G $ for a general compact Lie subgroup $K $ of the automorphism group of $G$. In \cite {Lu.Mo.1} and \cite{Lu-Mo.2}, it was shown that
for an exponential Lie group $\bf G$, the $\bf G$-prime ideals are also kernels of $\bf G$-orbits. In this way the bounded irreducible Banach space representations of an exponential Lie group could be determined.

The paper is organised in the following way: in Section \ref{intro} we recall the definition of induced representations and of kernel functions, we explain the notion of variable nilpotent Lie groups and their Lie algebras, of index sets for co-adjoint orbits and  of adapted kernel functions on a $G$-invariant sub-manifold of $\g^* $. In Section \ref{retrtheo}, we introduce the main theorem of the paper, the Retract Theorem, and in Section  \ref{proof} we present the proof of the theorem, dividing it  into several steps. As an application of the Retract Theorem, in the last section (Section \ref{primeideals}) we show that every $\bf G$-prime ideal in $\l1G $ is the kernel of a $\bf G$-orbit.

\section{Notations and generalities}\label{intro}

\subsection{Representations and kernel functions}
Let $G=\exp \g$ be a connected, simply connected, nilpotent Lie
group and $\g$ be its Lie algebra. All the irreducible unitary representations of $G$ (and
hence of $L^1(G)$) are obtained (up to equivalence) in the
following way: Let $l\in \g^*$ and  $\p=\p(l)$ be an arbitrary
polarisation of $l$ in $\g$ (a maximal isotropic subalgebra of
$\g$ for the bilinear form $(X,Y)\mapsto \langle l,[X,Y]\rangle$). Let
$P(l)=\exp{\p(l)}$. The induced representation denoted by $\pi_l:=
\text{ind}^G_{P(l)}\chi_l$ on the Hilbert space $\frak H_l$,
\begin{eqnarray*}
 \frak H_l=L^2(G/P(l), \chi_l)&:=&\{\xi:G\to\C; \, \xi \text{ measurable },\xi(gp)=\ch_l(p\inv)\xi(g), g\in G, p\in P(l),\\
\nn  &  &\no{\xi}2^2=
\int_{G/P(l)}\val{\xi(g)}^2d\dot g <\iy\},
 \end{eqnarray*}
where $d\dot g $ is the invariant measure on $G/P(l)$, is unitary and irreducible. Here
$\chi_l$ is the character defined on $P(l)$ by $\chi_l(g) =
e^{-i \langle l,\log g \rangle}$ for all $g\in P(l)$. Two different polarisations
for the same $l$ give equivalent representations. The same is true for the case of 
two linear forms $l$ and $l'$ belonging to the same co-adjoint orbit.

One particular way to obtain a polarisation is the
following: Let $\{Z_1, \dots, Z_n\}$ denote a Jordan-H\" older
basis of $\g$, for $1\leq k\leq n$, let $\g_k:=\text{span}\{Z_k, \dots, Z_n\}$ be the linear
span of $Z_k, \dots, Z_n$ and $l_k = l\vert_{\g_k}$ for all $l\in
\g^*$. The polarisation $\p(l)_\ZZ=\p(l):=\sum_{k=1}^n \g_k(l_k)$ of $l$ in $\g$, with $\g_k(l_k):=
\{U\in \g_k; \langle l,[U,\g_k] \rangle \equiv 0\}$, is called the Vergne polarisation at $l$ with respect to
the basis $Z_1, \dots, Z_n$. We refer to \cite{Co.Gr.} for more details on the theory of
irreducible representations of nilpotent Lie groups.

Let $\pi_l=\text{ind}_{P(l)}^G \chi_l$. The corresponding
representation of $L^1(G)$, also denoted by $\pi_l$, is obtained
via the formula $\pi_l(f)\xi:= \int_G f(x) \big (\pi_l(x)\xi
\big)dx$, for all $\xi \in \frak H_l$. If $f\in {L^1(G)}$,
then $\pi_l(f)$ is a kernel operator, i.e. it is of the form
$$\big (\pi_l(f)\xi \big )(g) = \int_{G/P(l)} F(l,g,u) \xi(u)du,$$
where $F$ is the operator kernel given by
$$F(l,g,u)=\int_{P(l)}f(ghu^{-1})\chi_l(h)dh \quad  \text{for } \, g,u\in G.$$
If $f$ is a Schwartz function, then the kernel function $F$ belongs to
${\mathcal C}^{\infty}$ and satisfies the covariance relation
$$F(l, gh, g'h')= \overline{\chi_l(h)} \chi_l(h') F(l, g, g') \quad \text{for } \, h,h' \in P(l) \text{ and } g,g' \in G,$$
and is a Schwartz function on $G/P(l) \times G/P(l)Ã$.

\subsection{Group actions}\label{compact}
Let $G=\exp{\g}$ be a connected, simply connected, nilpotent Lie
group and $A$ be a Lie subgroup of the automorphism group $\text{Aut}(G)$ of $G$ acting smoothly on $G $. This action will be denoted by
$$\begin{array}{ccc}
\nn A \times G &\mapsto &G\\
\nn (a,x) &\mapsto &a\cdot x.
\end{array}$$ The action of $A$ on $G$ induces naturally actions of $A$ on $\g$, $\g^*$, $\wh G$, $L^1(G)$, and on ${\mathcal S}(G)$.
These group actions will lead to examples for our
retract theory and provide an important application of retracts.

\subsection{Variable Lie algebras and groups}\label{variable}

We will prove our main theorem by induction; in our proofs, new parameters and new variations will appear. This may be handled most easily by the concept of \emph{variable Lie structures}. Such structures were already
considered in \cite{Le.Lu.}, \cite{Lu-Z}, \cite{Lu.Mu.} and \cite{Lu.Mo.Sc.}, among others.

\begin{definition}
\begin{enumerate}
\item  Let $\g$ be a real vector space of finite dimension $n$ and
${\mathcal B}$ be an arbitrary nonempty set. We say that $( {\mathcal
B},\g)$ is a {\it variable (nilpotent) Lie algebra} if
\begin{enumerate}
\item For every $\beta \in {\mathcal B}$, there exists a Lie bracket
$[ \cdot, \cdot]_{\beta}$ defined on $\g$ such that
$\g_{\beta}:=(\g, [ \cdot, \cdot]_{\beta})$ is a nilpotent Lie
algebra.

\item There exists a fixed basis $\ZZ=\ZZ^0=\{ Z_1=Z_1^0, \dots, Z_n=Z_n^0\}$ of $\g$
such that the structure constants $a^k_{ij}(\beta)$ defined by
$$[Z_i, Z_j]_{\beta} := \sum_{k=1}^n a^k_{ij}(\beta) Z_k$$ satisfy
the following property: For all $\beta \in {\mathcal B}$ and
$k\leq \max\{i, j\}$, $a^k_{ij}(\beta)=0$. This means that $\{Z_1,
\dots, Z_n\}$ is a Jordan-H\" older basis for $\g_{\beta}=(\g, [\cdot, \cdot]_{\beta})$.
\end{enumerate}

\item Assume that ${\mathcal B}$ is a  smooth
manifold. If the structure constants $a^k_{ij}(\beta)$ vary
 smoothly on $ \B $, we say that $({\mathcal B},
\g)$ is a smooth variable (nilpotent) Lie algebra.
\end{enumerate}

We will denote $({\mathcal B}, \g) = (\g, [\cdot,
\cdot]_{\beta})_{\beta \in {\mathcal B}}$ for the variable Lie algebra.
\end{definition}

\rm For the rest of the paper we will assume that all
variable Lie algebras are smooth. If ${\mathcal B}$ is reduced to a singleton, we have in
fact no dependency on $\beta$ in ${\mathcal B}$ but a fixed Lie algebra.
To each variable Lie algebra, we associate a variable Lie group $G_{\beta}$. The variable Lie group
$\G:=(G_{\beta})_{\beta}$ may be identified with the
collection of Lie algebras $(\g, [\cdot, \cdot]_{\beta})_{\beta}$
equipped with the corresponding Campbell-Baker-Hausdorff multiplications.
If $\G=(G_{\beta})_{\beta}$ is a (smooth) variable
Lie group endowed with a fixed Jordan-H\" older basis, then the
corresponding Vergne polarisations, induced representations and
operator kernels all depend on $\beta \in {\mathcal B}$ and $l\in \g^*$.

\subsection{Ludwig-Zahir indices}\label{step}
Let $({\mathcal B}, \g)$ be a smooth variable Lie algebra. We
assume that $\g$ is equipped with a fixed basis
$\ZZ=\ZZ^0=\{Z_1=Z_1^0, \dots, Z_n=Z_n^0\}$, which is a Jordan-H\"
older basis for every $(\g, [\cdot, \cdot]_{\beta})$.

Let $(\beta, l)\in {\mathcal B} \times \g^*$.
The Ludwig-Zahir indices $ I(\be, l)$ defined in \cite{Lu-Z} can be obtained in the following way: Let $\g_{\beta}(l):= \{ U\in
\g; \langle l,[U,\g]_{\beta} \rangle \equiv 0\}$ be the stabiliser of $l$ in
$\g_{\beta}=(\g, [\cdot, \cdot]_{\beta})$ and let $\a_{\beta}(l)$ be
the maximal ideal contained in $\g_{\beta}(l)$. If
$\a_{\beta}(l)=\g_{\beta}(l)=\g$, then $\chi_{(\beta,l)}(x):=
e^{-i \langle l, \log_{\beta}x \rangle}$ is a character on $G_{\beta}$ and nothing has to be done. In this case, there are no Ludwig-Zahir indices, i.e. $ I(\be, l)=\es $. Otherwise, let
\begin{eqnarray}
\nn j_1(\beta,l) &=& \max \{j\in \{1, \dots ,n\};~Z_j^0 \not\in
\a_{\beta}(l)\}, \, \text{ and}\\
\nn k_1(\beta,l) &=& \max \{ k\in \{1, \dots, n\};~\langle l,
[Z^0_{j_1(\beta,l)}, Z^0_k]_{\beta}\rangle \neq 0\}.
\end{eqnarray}
We let
\begin{eqnarray*}
X_1(\be,l)&:&=Z^0_{k_1(\beta,l)}, \\
Y_1(\be, l)&:&=Z^0_{j_1(\beta,l)}, \\
Z_1(\be, l)&:&= [Z^0_{k_1(\beta, l)},Z^0_{j_1(\beta,l)}] _\be, \, \text{ and} \\
c(\be,l)&:&=\langle{l},{Z_1(\be,l)}\rangle.
\end{eqnarray*}
We then consider
\begin{eqnarray}\label{gonbel}
\g_1(\beta,l):= \{U \in \g; ~\langle l,[U,Y_1(\beta,l)]_\be\rangle =0\}
\end{eqnarray}
which is an ideal of co-dimension one in $\g_{\beta}$.

A Jordan-H\" older basis  of $(\g_1(\beta,l), [\cdot,
\cdot]_{\beta})$ is given by $\ZZ^1(\be,l)=\{ Z^1_i(\beta,l);~i
\neq k_1(\beta,l)\}$ with
\begin{eqnarray}\label{defzonzbek}
Z^1_i(\beta,l) := Z_i^0 -\frac{ {\langle l,[Z^0_i,
Y_1(\beta,l)]_{\beta}\rangle}}{c(\be,l)} X_1{(\beta,l)},  \quad i\neq k_1(\beta,l).
\end{eqnarray}
One sees that $Z^1_i(\beta,l) = Z^0_i$, if $i>k_1(\beta,l)$.
As previously we may now compute the indices $j_2(\beta,l),
k_2(\beta,l)$ of $l_1:=l\vert_{\g_1(\beta,l)}$ with respect to
this new basis and construct the corresponding subalgebra
$\g_2(\beta,l)$ with its associated basis
$\{Z^2_i(\beta,l)~; ~i \neq k_1(\beta,l), k_2(\beta,l)\}$. This
procedure stops after a finite number $r$ of steps. Let
$$I_\ZZ(\beta,l) = I(\be, l)=\big ((j_1(\beta,l), k_1(\beta,l)), \dots,
(j_r(\beta,l), k_r(\beta,l)) \big ),$$ which is called the
\emph{Ludwig-Zahir index} of $l$ in $\g_{\beta}$ with respect to the
basis $\{Z_1, \dots, Z_n\}$. The construction in \cite{Lu-Z}
shows that the final subalgebra $\g_r(\beta,l)$ obtained by this
construction coincides with the Vergne polarisation of $l$ in
$\g_{\beta}$ with respect to the basis $\ZZ^0$ (see also
\cite{Lu.Mu.}, \cite{Lu.Mo.Sc.}).
Note that the length $ \val I=2r $ of the index set $I= I(\be, l) $ gives us the dimension of the co-adjoint orbit $ \Ad^*(G_\be)\ell $.
The vectors $Y_1(\be,l),\cdots, Y_r(\be,l) $ together with the stabiliser $\g_\be(l) $ of $l $ in $\g_\be $ span the polarisation $\p_\be(l)=\g_r(\be,l) $ and
\begin{eqnarray*}
 \g=\oplus_{i=1}^r \R X_i(\be,l)\oplus_{i=1}^r \R Y_i(\be,l)\oplus \g_\be(l).
 \end{eqnarray*}

Let us introduce the following notations: For any
index set $ I\in (\N^2)^r \equiv \N^{2r}$ with $r=0,\cdots, \dim {\g}/2$, we let
\begin{eqnarray*}
&(\B\ti\g^*)_I:=\{(\be,l)\in\B\ti\g^*; I(\be,l)=I\} \quad \textrm{ and }\\
&(\B \ti \g^*)_I \cap \Sigma_I := \{(\beta,l) \in (\B\ti \g^*)_I; \,
l(Z_{j_i})=l(Z_{k_i})=0 \textrm{ for } 1 \leq i \leq r\}.
\end{eqnarray*}
This last line corresponds to the Pukanszky section associated to
the index $I$. In fact, in \cite{Lu.Mo.Sc.} it was proved that
the indices $j_s(\beta,l), k_s(\beta,l)$ coincide with the
Pukanszky indices of the given layer (if one does not make any
distinction between the $j$'s and the $k$'s). For many $I$'s, the
subset $(\B\ti\g^*)_I$ is empty. Hence it is reasonable to define
\begin{eqnarray*}
\I:=\Big\{I\in \bigcup_{j=0}^{\dim {\g}/2} (\N^2)^{j}; \,
(\B\ti\g^*)_I\ne\es\Big\}, \quad \textrm{and} \quad \B\ti\g^*=
\dot\bigcup_{I\in \I}(\B\ti\g^*)_I.
\end{eqnarray*}
This gives a partition of $ \B\ti\g^*$ into the different layers
$(\B\ti\g^*)_I $. The set $\I$ may  be ordered lexicographically:
if $I=\{(j_1,k_1),\cdots, (j_r,k_r)\},I'=\{(j_1',k_1'),\cdots,
(j_{r'}',k_{r'}')\}\in\I $, we say that $I<I' $ if either  $2r=\val
I< \val{I'}= 2r' $  or there exists $a\in \{ 1, \dots, r\}$ such that
$$( j_s, k_s) = (j_s', k_s') \textrm{ if } s<a \textrm{ and } (
j_a, k_a) < (j_a', k_a'),$$ which means that
$$\textrm{either }  j_a < j_a'  \textrm{ or } (j_a = j_a' \textrm{ and }  k_a <
k_a').$$ This allows us to define
$$(\B \ti \g^*)_{\leq I} := \{(\beta,l) \in (\B \ti \g^*)_J; \,J \leq I\}= \bigcup_{J\leq I}(\B\ti\g^*)_J.$$
By induction on the length of the index sets, it is easy to see that for every $I\in \I $ there exists  a smooth function $P_I $ on $\B\ti\g^* $, which is polynomial in $l $ for fixed $\be\in\B $ such that
\begin{eqnarray}\label{BgstarI}
 (\B\ti \g^*)_I=\{(\be,l); P_{I'}(\be,l)=0 \text{ for }I'>I \text{ and } P_I(\be,l)\ne 0\}.
\end{eqnarray}

\subsection{Co-adjoint orbits}\label{coadjoint}

For any index set $I $, we consider the subspace $\s_I $ of $\g^* $ which is given by
\begin{eqnarray*}
 \s_I=\text{span} \{Z_j^*;~ j\in I\}.
 \end{eqnarray*}
For each $\be\in\B $, let
\begin{eqnarray*}
 \Si_{\be,I}:=\{(\be,l)\in (\{\be\}\ti\g^*)_I;~ l\in\s_I \}.
 \end{eqnarray*}
Then $\Si_{\be,I} $ is locally closed in $\s_I $, since we have the  smooth functions $P_{I'},I'\in\I$, defined on $\B\ti\g^*$ as in \eqref{BgstarI}.

Let $d:=\val I $. For $l\in\g^* $, let

\begin{eqnarray*}
 \Om_{\be,l}=\{Ad_\be^*(g)l; \, g\in G\}
 \end{eqnarray*}
be the $G_\be $-orbit of $l $.
Then
 \begin{eqnarray*}
 \dim {\OM_{\be,l}}=d \quad \text{for} \quad l\in(\B\ti\g^*)_I.
 \end{eqnarray*}
There exist functions $p_j:(\B\ti\g^*)_I\ti \R^d\to\R, j=1,\cdots, n, $ which are rational in $l\in \g^* $ and polynomial in $z\in \R^d $ for fixed $\be\in \B$ such that for every
$(\be, l)\in (\B\ti\g^*)_I $,
\begin{eqnarray*}
 \OM_{\be,l}=\Big\{\sum_{i=1}^n p_{i}(\be,l,z) Z_i^*; \, z\in \R^d\Big\}.
 \end{eqnarray*}
Furthermore if we write $I=\{i_1<\cdots<i_d\} $, then
\begin{eqnarray*}
 p_{i_j}(\be,l,z)= z_j \quad \text{for} \, \ j=1,\cdots,d,
 \end{eqnarray*}
and for $i\not\in I$, we have
\begin{eqnarray*}
p_{i}(\be,l,z)= \langle l ,Z_i\rangle +p'_i(\be,l,z_1,\cdots, z_j), \quad i_j<i<i_{j+1}.
 \end{eqnarray*}

\begin{definition}\label{}
\rm   A subset $M $ of $\B\ti\g^* $ is called \textit{$G $-invariant} if for every $(\be,l)\in M $ the element $g\cdot (\be,l):=(\be, \Ad^*_\be(g)l) $ is also contained in $M. $
\end{definition}

\subsection{Schwartz functions}\label{function}
Let $r \in \N \setminus \{0\}$, we define the space of (generalised) Schwartz functions ${\mathcal
S}( \R^r, {\mathcal B}, G) \equiv {\mathcal S}( \R^r, {\mathcal
B}, \g) \equiv {\mathcal S}( \R^r, {\mathcal B}, \R^n)$ to be the
set of all functions $f$ from $\R^r \times {\mathcal B} \times G$
to $\C$ such that the function $\tilde f$ defined by $$\tilde
f(\alpha,\beta,(x_1, \dots, x_n)):= f(\alpha,\beta,
\hbox{exp}_{\beta}(x_1Z_1+ \cdots + x_nZ_n)) \quad \text{for } \, \al\in \R^r, \be\in \B$$
is smooth on $\R^r \times {\B} \times \R^n$ and that
\begin{eqnarray}
\nn \Vert \tilde f\Vert_{K, T_1, \dots, T_s,
A_1, A_2, B_1, B_2} &=& \sup_{\beta \in K; \alpha \in \R^r; x\in
\R^n} \Big [\sup_{\vert r_i\vert \leq A_i; \vert s_j\vert \leq
B_j; i,j\in \{1,2\}} \vert \alpha^{r_1} x^{s_1} \\
\nn &{ }& \quad \quad \quad \quad T_1 T_2 \cdots T_s
\frac{\partial^{r_2}}{\partial \alpha^{r_2}}
\frac{\partial^{s_2}}{\partial x^{s_2}} \tilde f(\alpha,\beta,
(x_1, \dots, x_n)) \vert \Big ]\\
\nn &<& + \infty,
\end{eqnarray}
for any compact subset $K$ of ${\B}$, any finite collection
$T_1, \dots T_s$ of smooth vector fields defined on the manifold
${\B}$, and any $A_1, A_2, B_1, B_2 \in \N$. The function space
${\mathcal S}(\R^r, {\mathcal B}, G)$ is equipped with the
topology defined by the collection of all these semi-norms. One
may of course also use coordinates of the second kind to define
the semi-norms on ${\mathcal S}(\R^r, {\mathcal B}, G)$. Note that the space
${\mathcal S}(\R^r, {\mathcal B}, G)$ does not depend on the choice of the Jordan-H\" older basis.

\subsection{Kernel functions}\label{kerfunction}

Let $S$ be a subset of $ \B\ti\g^*  $ and $L$ be  a smooth manifold. We say that a mapping $F:S\to L $ is smooth, if the restriction of $F $ to any smooth manifold  $N $ contained in $S $ is smooth.

Let $\B\ti\g^* $ be a smooth variable nilpotent Lie group with Jordan-H\"older basis $\ZZ $.
For any $(\be,l)\in \B\ti \g^* $ denote the Vergne polarisation at $(\be,l) $ associated to $\ZZ $.
We put $\pi(\beta,l):=\textrm{ind}^G_{P(\beta,l)}
\chi_l$, with $P(\beta,l):=\textrm{exp}_{\beta} \p(\beta,l)$, for
the corresponding family of induced unitary representations. Then the mapping $(\be,l)\mapsto \p(\be,l) $ is smooth on each subset $(\B\ti\g^*)_I $.
For each index set $I $ with length $d_I$ and $(\be,l)\in \B\ti\g^* $, choose a Malvev basis $R(\be,l)=\{R_1(\be,l),\cdots, R_{d_I}(\be,l)  \} $ of $\g $ relative to $\p(\be,l) $, such that the mappings $(\be,l)\mapsto R(\be,l) $ are smooth on the different layers $(\B\ti\g^* )_I$.

\begin{definition}\label{kernelf}
Let $M$ be any smooth $G $-invariant manifold of $\B \ti \g^*$ and let $r\in\N $.   We denote by
${\mathcal D}^c_{M,r}$ the space of all  functions $F: \R^r \times { M} \times G \times G
\to \C$ satisfying the following conditions.

\begin{enumerate}\label{}
\item  $F $ satisfies the covariance condition for every $(\be,l)\in M $ with respect to  $\p(\be,l) $, i.e.
\begin{eqnarray*}
 F(\alpha, (\beta,l), x \cdot_{\beta}p, y\cdot_{\beta}q)
= \overline {\chi_l(p)} \chi_l(q) F(\alpha,(\beta,l), x,y))
 \end{eqnarray*}
 for
all $\alpha\in \R^r$, $p,q \in P(\beta,l)$ and $x,y\in G $.
\item  The function $F $ satisfies  the following compatibility
condition
\begin{eqnarray*}
 F(\alpha, (\beta, \text{Ad}^*_{\beta}(g)l), x,y)=F(\alpha,
(\beta,l), x\cdot_{\beta} g, y \cdot_{\beta} g),
 \end{eqnarray*}
for $\al\in\R^r, (\be,l)\in M$ and $x,y,g\in G $.
 This compatibility condition reflects
the unitary equivalence of the representations $\pi_{(\beta,l)}$
and $\pi_{(\beta, \text{Ad}^*_{\beta}(g)l)}$.
\item The support of $F$ in $(\beta,l)$ is compact modulo $G $, i.e. there exists a compact subset $C $ of $M $ such that $F(\cdot, (\beta,l), \cdot, \cdot)$ is $0$ outside the  subset of $G\cdot {C}$.
\item The function $F$ has the  Schwartz space property,
 i.e. for any $I\in \I $ the function $F\res{\R^r\ti M\cap (\B\ti\s_I)\ti G\times G }$ is smooth and that
\begin{eqnarray}
\nn \Vert F \Vert_{ D, A_1, A_2, B_1, B_2, C_1, C_2} &:=&
\sup_{(\beta,l) \in M, \alpha \in \R^r, x,x'\in \R^r} \Big [
\sup_{ \vert r_i\vert \leq A_i, \vert s_j\vert \leq B_j, \vert
t_k\vert \leq C_k; i, j, k \in \{1,2\}} \vert
\alpha^{r_1} x^{s_1} (x')^{t_1} \\
\nn &{ }&      D_{(\beta,l)}
\frac{\partial^{r_2}}{\partial \alpha^{r_2}} \frac
{\partial^{s_2}}{\partial x^{s_2}} \frac{\partial^{t_2}}{\partial
(x')^{t_2}}  \tilde F(\alpha, (\beta,l), x, x')\vert\Big ]< \iy, \\
\end{eqnarray}
where $$\tilde F(\alpha, (\beta, l), x, x'):= F(\alpha, (\beta,l),
\hbox{exp}_{\beta}(x_1R_1) \cdots \hbox{exp}_{\beta}(x_rR_r),
\hbox{exp}_{\beta}(x'_1R_1) \cdots \hbox{exp}_{\beta}(x'_rR_r)),$$
 for any smooth differential operator
$D=D_{(\beta,l)}$ on the manifold $M$, and any $A_1, A_2$, $B_1,
B_2$, $C_1, C_2 \in \N$.
 \end{enumerate}
\end{definition}

\rm
\noi The
space ${\mathcal D}^c_{M,r}$ will be equipped with the
topology defined by the collection of all these semi-norms. This
does of course not depend on the choice of the smooth Malcev basis
of $\g$ with respect to the smooth family of Vergne polarisations.

\begin{definition}\label{adapted}
Let $M\subset \B\ti\g^* $. A field $F=(F(\be,l))_{{(\be,l)}\in M} $ of kernel functions is called \textit{adapted} if it satisfies the conditions in Definition \ref{kernelf}.
 \end{definition}

For an adapted field of kernel functions $F $ on $M $, denote by
$op_F $ the field of smooth operators defined through their kernel functions. For $(\be,l)\in M $, the operator $op_{F(\be,l)} $ acts on the space $L^2(G/P(\be,l),\chi_{(\be,l)}) $ in the following way:
\begin{eqnarray*}
 op_{F(\be,l)}\xi(g)=\int_{G/P(\be,l)}F(\be,l)(g,x)\xi(x)d\dot x.
 \end{eqnarray*}

\begin{remarks}{\rm
a) If we impose the condition that the support of $(\beta,l)$ be contained in the set $G\cdot C_0 $ for a
fixed  subset $C_0$ of ${M}$, we will denote the
space of kernel functions by ${\mathcal D}^{C_0}_{M}$.

\noi b) One has a similar definition of the kernel functions if
one takes another smooth family of polarisations together with a
smooth family of Malcev bases.}
\end{remarks}

\section{The retract theorem}\label{retrtheo}

In this section, we state the main theorem of the paper which will be proved in the next section.

\begin{theorem}\label{retract} Let $\B\ti G $ be a smooth variable nilpotent Lie group, $I=\{(j_1,k_1)< \cdots< (j_r, k_r)\} $ be an index set  and let  ${ M}$ be a smooth $G $-invariant
sub-manifold of $\B\ti \g^*$ contained in $(\B\ti \g^*)_{\leq I}$ such that $M_I:=M\cap (\B\ti \g^*)_{I}\ne\es $.  Let $\pi(\beta,l)$ be defined as previously from the
smooth family of Vergne polarisations for $(\beta,l) \in M$. Then there exists an open nonempty relatively compact subset ${\mathcal M}\subset M_I$ with closure $\ol{\mathcal M}$ contained in $M_I$ such that
the following holds: For any adapted kernel function $F \in {\mathcal
D}^\M_{M}$, there is a function $f$
in the Schwartz space ${\mathcal S}(\R^r,{\mathcal B}, G)$ such
that $\pi_{(\beta,l)}(f(\alpha,\beta,\cdot))$ has $F(\alpha,
(\beta,l), \cdot, \cdot)$ as an operator kernel for all $(\alpha,
(\beta,l))\in \R^r \ti { M}$. Moreover the mapping $F\mapsto f$ is continuous with respect to the
corresponding function space topologies.
\end{theorem}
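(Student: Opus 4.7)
The natural plan is induction on $d=|I|=2r$, using the variable group reduction via the Ludwig--Zahir construction and the variable group techniques of \cite{Lu-Z,Lu.Mu.}.

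\textbf{Base case.} When $r=0$, every $(\be,l)\in M$ has $\p_\be(l)=\g$, so $\pi_{(\be,l)}=\chi_l$ is a one-dimensional character and the $G$-orbit of $l$ is the singleton $\{l\}$. The covariance and compatibility conditions force $F(\al,(\be,l),x,y)=\ol{\chi_l(x)}\chi_l(y)\,\varphi(\al,\be,l)$ for some smooth function $\varphi$ compactly supported in $\be\ti l$ on each layer, with Schwartz decay. Then defining $f(\al,\be,\cdot)$ on $G_\be$ to be the (partial) inverse Fourier transform of $\varphi(\al,\be,\cdot)$ in the variable $l\in\g^*$ (via the Jordan--H\"older coordinates) yields the required Schwartz function. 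Continuity is immediate.

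\textbf{Inductive step.} Suppose the theorem holds for all smooth $G$-invariant sub-manifolds whose largest index set has length $<2r$. Choose any $(\be_0,l_0)\in M_I$ and a small open relatively compact neighbourhood $\M\subset M_I$ of $(\be_0,l_0)$ on which the data $X_1(\be,l),Y_1(\be,l),Z_1(\be,l),c(\be,l)$ from Section~\ref{step} vary smoothly and $|c(\be,l)|$ is bounded below. Then $\g_1(\be,l)$ of \eqref{gonbel} is a codimension-one ideal of $\g_\be$ containing $\p_\be(l)$, with the smoothly varying Jordan--H\"older basis $\ZZ^1(\be,l)$ of \eqref{defzonzbek}, and $\pi_{(\be,l)}=\ind_{G_1(\be,l)}^{G_\be}\pi_1(\be,l)$ where $\pi_1(\be,l)=\ind_{P(\be,l)}^{G_1(\be,l)}\chi_l$. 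The Ludwig--Zahir index of $l|_{\g_1(\be,l)}$ in this new basis is exactly the truncated tail $I':=\{(j_2,k_2),\dots,(j_r,k_r)\}$ of length $2r-2$.

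\textbf{Fourier decomposition in the $X_1$ direction.} Writing $g=\exp_\be(sX_1(\be,l))g_1$ and $h=\exp_\be(s'X_1(\be,l))g_1'$ for $g_1,g_1'\in G_1(\be,l)$, the kernel function becomes $F(\al,(\be,l);s,g_1;s',g_1')$. Take a partial Fourier transform in the pair $(s,s')$ (equivalently, pass to the Mackey decomposition of $\pi_{(\be,l)}|_{G_1(\be,l)}$, where $s$ parametrizes the orbit of $l|_{\g_1}$ under $\Ad^*_\be(\exp(-sX_1))$, a linear shift of the $Y_1^*$-coordinate by $sc(\be,l)$). This produces a new family
\[
\widetilde F(\al,t;(\be,l);g_1,g_1')
\]
of adapted kernels for the representations $\pi_1(\be,\Ad^*_\be(\exp(-sX_1))l)$ of the variable Lie group obtained from $(\g_1(\be,l),[\cdot,\cdot]_\be)_{(\be,l)\in\M}$, parameterised by the enlarged set $\R^{r+1}\ti\B'$ with $\B'$ a suitable new smooth base manifold built from $\M$ and the $s$-variable. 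By construction $\widetilde F$ lies in the corresponding $\D^c$-space and has largest index $I'$ of length $2r-2$.

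\textbf{Applying the induction hypothesis and reconstruction.} By the induction hypothesis, there is a continuous retract producing a Schwartz function $\widetilde f(\al,t;\be';g_1)\in\S(\R^{r+1},\B',G_1)$ implementing $\widetilde F$ under the representations $\pi_1$. We now lift back to $G$: define
\[
f(\al,\be,g):=\int_\R \widetilde f\bigl(\al,t;(\be,l),g_1\bigr)\,e^{its}\,dt,
\]
with $g=\exp_\be(sX_1)g_1$, after an inverse Fourier transform in $t$ that undoes the transform introduced above, and check (by a direct computation using the formula for induction in stages) that $\pi_{(\be,l)}(f(\al,\be,\cdot))$ has operator kernel $F(\al,(\be,l),\cdot,\cdot)$ for every $(\be,l)\in\M$. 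The Schwartz class membership and continuity of $F\mapsto f$ follow from the Schwartz character of the Fourier transform, the induction hypothesis, and the smoothness of the basis change $\ZZ\rightsquigarrow\ZZ^1(\be,l)$ on the relatively compact set $\M$.

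\textbf{Lower layers and extension.} Since adapted kernels $F\in\D^\M_M$ are supported in $G\cdot\M\subset G\cdot M_I$ and any point of $G\cdot\M$ lies in the top layer $M_I$, the contribution from $M\cap(\B\ti\g^*)_{<I}$ vanishes on the support of $F$, so the construction above already yields the desired $f$ on all of $M$. The main obstacle I foresee is the bookkeeping of step three: organising $\widetilde F$ as a genuine adapted kernel over a bona fide smooth variable Lie group so that the induction hypothesis applies verbatim, and verifying that the covariance and compatibility conditions on $M$ translate cleanly under the Fourier transform in the $X_1$-variable. The need to choose $\M$ small enough that $c(\be,l)$ is non-degenerate and the new Jordan--H\"older basis stays smooth is precisely why the theorem produces an $\M$ rather than the whole of $M_I$.
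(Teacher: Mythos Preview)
Your outline has the right skeleton—induction on $|I|$, pass to the codimension-one ideal $\g_1(\be,l)$, apply the hypothesis there, then lift back—but there is a genuine gap at the core of the inductive step.

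The function $f$ you produce at the end depends on $l$. You take the new base $\B'$ to be ``built from $\M$ and the $s$-variable,'' so $\widetilde f(\al,t;(\be,l),g_1)$ carries an $l$-parameter, and your reconstruction formula $f(\al,\be,g)=\int_\R \widetilde f(\al,t;(\be,l),g_1)e^{its}\,dt$ still contains $l$ (indeed the very decomposition $g=\exp_\be(sX_1(\be,l))g_1$ with $g_1\in G_1(\be,l)$ depends on $l$). But $f$ must lie in $\S(\R^r,\B,G)$: it may depend on $\al$, $\be$, and $g$ only. This is not a cosmetic issue—the subalgebra $\g_1(\be,l)=\ker\langle l,[\,\cdot\,,Y_1]_\be\rangle$ genuinely moves with $l$, so there is no canonical way to read off an $l$-independent function from your data.

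The paper's resolution is the substantial technical content of Sections~\ref{smoothsub}--\ref{constvar}. One does \emph{not} take $\B'$ containing $\M$; instead one sets $\B_1=\B\times\R\times\cc_1^*$ with $\cc_1=\g_{j_1+1}$, a global object, and constructs a smooth map $\al(\be,q)\in(\n^1_\be)^\perp$ for \emph{all} $(\be,q)\in\B\times\cc_1^*$ (via carefully chosen cutoffs $\va$) so that the ideal $\g^1(\be,q)=\ker\de(\be,q)$ is defined everywhere and agrees with $\g_1(\be,l)$ when $q=l|_{\cc_1}$ and $(\be,l)$ lies in the reduced region $(\B\times\g^*)_{\leq I,R,\de}$. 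The lifted function is then obtained by integrating over $q\in\cc_1^*$, namely $f(\al,\be,g)=\int_{\cc_1^*}\widetilde f(\al,(\be,0,q),g)\,dq$, which kills the $l$-dependence. Making this work is exactly why the preparatory reductions of $\B$ and $M$ (controlling $\dim\n^1_\be$, bounding $|\langle l,Z_\be\rangle|$ and $\|p_\be(Z_\be)\|_2$ away from $0$ and $\infty$) are needed; they are not bureaucratic but essential to the globalisation.

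A secondary point: your final paragraph asserts that since $F$ is supported in $G\cdot\M\subset M_I$, the lower layers take care of themselves. That is not automatic—the issue is whether $\pi_{(\be,l)}(f)=0$ for $(\be,l)\in M\setminus G\cdot\M$, which is a statement about $f$, not about $F$. In the paper this comes out of the global construction: the long computation in Section~4.3.3 is valid for every $(\be,l)\in M$ (not just in $\M$) precisely because the new variable group was built over all of $\B_1$ and the induction hypothesis was applied on the full $M_1^0$, so one obtains $\pi_{(\be,l)}(f)=op_{F(\be,l)}$ everywhere, and this is zero off the support of $F$.
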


\noi If the variation is trivial, we get the following theorem.

\begin{theorem}\label{retract1} Let $ \g $ be  a nilpotent Lie algebra with Jordan-H\"older basis $\ZZ $.
  Let ${ M}$ be a smooth $G $-invariant sub-manifold of $\g^*$. Let $I:=\max\{J\in \I_\ZZ:\ M\cap \g^*_J\ne \es\} $.  Let $\pi_l=\pi(l)$ be defined as
previously from the smooth family of Vergne polarisations for $l\in M$. Then there exists an open, relatively compact nonempty subset $ {\mathcal M}\subset \g^*_{I} $ of $ M $ such that ${\mathcal M} \subset \overline
{\mathcal M}\subset M_I$, $\overline {\mathcal M}$ is compact and that the
following holds: For any kernel function $F \in {\mathcal D}^\M_{M}$,
there is a function $f$ in the Schwartz space ${\mathcal S}(G)$ such that $\pi_{l}(f)$ has $F( l, \cdot,
\cdot)$ as an operator kernel for all $l \in {M}$. Moreover, the Schwartz function $f$ may be
constructed such that the mapping $F\mapsto f$ is continuous with respect to the corresponding
function space topologies.
\end{theorem}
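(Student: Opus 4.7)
The plan is to obtain Theorem \ref{retract1} as the specialisation of Theorem \ref{retract} in which the parameter set $\B$ is a single point and the auxiliary parameter $r = 0$. First I would set $\B := \{*\}$, which gives a trivially smooth variable Lie algebra $(\B, \g)$ with fixed Jordan-H\"older basis $\ZZ$. Viewing $M$ as $\{*\} \times M \subset \B \times \g^*$, both the smoothness and the $G$-invariance of $M$ transfer without change. By the choice of $I = \max\{J \in \I_\ZZ : M \cap \g^*_J \ne \es\}$, every $l \in M$ has Ludwig--Zahir index $\leq I$, so $M \subset (\B \times \g^*)_{\leq I}$, and $M_I = M \cap \g^*_I$ is nonempty. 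Thus the hypotheses of Theorem \ref{retract} are verified.

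Next I would identify the relevant function spaces under this specialisation. The generalised Schwartz space $\S(\R^0, \{*\}, G)$ collapses to $\S(G)$, and the space $\D^\M_{M,0}$ of adapted kernel functions reduces to $\D^\M_M$: conditions (1)--(4) of Definition \ref{kernelf} specialise to the familiar covariance, compatibility, compact $G$-support, and Schwartz-type estimates in $l \in M$, without the $\al$-variable, and the semi-norms degenerate accordingly. Applying Theorem \ref{retract} in this setting then produces an open, relatively compact $\M \subset M_I$ with $\ol{\M} \subset M_I$ together with a continuous linear retract $F \mapsto f$ from $\D^\M_M$ into $\S(G)$ such that $\pi_l(f) = \text{op}_{F(l)}$ for every $l \in M$, which is exactly the statement of Theorem \ref{retract1}.

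The real content therefore sits in Theorem \ref{retract}, whose proof (to be carried out in Section \ref{proof}) proceeds by induction on $|I| = 2r$ using the variable-group techniques of \cite{Lu-Z} and \cite{Lu.Mu.}. The main obstacle is not visible at the level of the specialisation, but it is precisely what forces the result to be formulated in the variable-group framework: even when the original variation is trivial, the descent $(\g, l) \mapsto (\g_1(l), l_1)$ through the Ludwig--Zahir indices introduces a genuine smooth dependence on an auxiliary parameter $\al \in \R^r$ through the Vergne polarisation vectors $X_1(l), Y_1(l)$ and the redefined basis $Z_i^1(l)$, so that the inductive hypothesis can only be applied in the variable form. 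The relative compactness condition $\ol{\M} \subset M_I$ is exactly what keeps the Vergne polarisation, the Malcev basis, and the orbit parametrisation $(p_i(l, z))_{i=1}^n$ smoothly and uniformly controlled on the highest layer $M_I$, away from the lower-index boundary where these data degenerate; this is the technical heart of the inductive step, and is what ensures that the reconstructed $f$ lies in the Schwartz space $\S(G)$ rather than merely being smooth with controlled support.
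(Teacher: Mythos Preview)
Your proposal is correct and matches the paper's own treatment: Theorem \ref{retract1} is stated immediately after Theorem \ref{retract} with the transition ``If the variation is trivial, we get the following theorem,'' and no separate proof is given --- it is precisely the specialisation $\B=\{*\}$, $r=0$ that you describe. Your observation that the variable-group framework is forced by the inductive step (even when the initial variation is trivial) is also exactly the paper's rationale for formulating the main result as Theorem \ref{retract}.
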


\begin{remark}\label{genericsi}
\rm   If $M $ is contained in $\g^*_{I_{max}} $, where $I_{max}$ is the maximal index set in $\I $, then we have the following (well known) result.
\end{remark}

\begin{theorem}\label{generic}
Let $\B\ti G$ be  a simply connected, connected smooth variable nilpotent Lie group 
and $M=(\B\ti \g^*)_{gen}:=(\B\ti \g^*)_{I_{max}} $ be the
space of generic co-adjoint orbits. Let $\M $ be an open relatively compact subset of $M $ such that $\ol \M\subset M.  $ For every adapted field of kernel functions $F\in \D^\M_M $, there exists a unique  Schwartz function $f=R(F):G\to \C $ such that
\begin{eqnarray*}
 \pi_{(\be,l)}(f)=op_{F(\be,l)} \, \text{ for any } \, (\be,l)\in \B\ti\g^*,
 \end{eqnarray*}
and the mapping $F\mapsto R(F) $ is continuous.
\end{theorem}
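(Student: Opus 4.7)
The plan is to construct $f=R(F)$ via the Fourier inversion formula on the generic part of $\wh{G_\be}$, carried out uniformly in the variable parameter $\be$, and then verify that the resulting function lies in $\S(\R^r,\B,G)$ and inverts the given operator family. Since $M=(\B\ti\g^*)_{I_{max}}$ is the top layer, the Pukanszky cross-section $\Si_{\be,I_{max}}$ meets each generic $G_\be$-orbit transversally in exactly one point, so the generic dual $(\wh{G_\be})_{gen}$ is parameterised by an open subset of the linear slice $\s_{I_{max}}\subset\g^*$. The Plancherel density on this slice is, up to a normalising constant, $|P_{I_{max}}(\be,l)|$ with $P_{I_{max}}$ as in \eqref{BgstarI}; since $\ol\M\subset M_I$ is compact, this density and its reciprocal are bounded on $\ol\M$ and depend smoothly on $(\be,l)$ there.

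Using the smooth family of Vergne polarisations $\p(\be,l)$ and the associated smooth Malcev basis $R_1(\be,l),\cdots,R_r(\be,l)$ on $M_I$, I would set
\begin{eqnarray*}
f(\al,\be,x):= c\int_{\Si_{\be,I_{max}}}\int_{G/P(\be,l)} F\big(\al,(\be,l),g,x\inv\cdot_\be g\big)\, d\dot g\, |P_{I_{max}}(\be,l)|\, dl,
\end{eqnarray*}
where $c$ is the standard normalising constant. Because $F\in\D^\M_M$ is supported, modulo $G$, in $G\cdot\M$, the covariance and compatibility relations of Definition \ref{kernelf} imply that the outer integration is effectively performed over a fixed compact subset of $\Si_{\be,I_{max}}\cap\ol\M$ depending continuously on $\be$.

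To verify that $f\in\S(\R^r,\B,G)$, I would follow the classical scheme of \cite{Co.Gr.} and \cite{Ho}, adapted to the variable setting of \cite{Lu-Z}. Expressing the inner integral in Malcev coordinates on $G/P(\be,l)$ and the outer integral in linear coordinates on $\Si_{\be,I_{max}}$, polynomial factors in the coordinates of $x$ correspond, via integration by parts in $l$, to derivatives of $F$ in $l$, while derivatives in $x$ translate into polynomial multiplication by coordinates of $l$. The Schwartz-type bounds of Definition \ref{kernelf} then yield uniform estimates for every Schwartz semi-norm of $f$, and smoothness in $\be$ is inherited from the smoothness of the structure constants $a^k_{ij}(\be)$, the Vergne polarisations, the Malcev basis, the Pukanszky section and the Plancherel density. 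The identity $\pi_{(\be,l)}(f(\al,\be,\cdot))=op_{F(\al,(\be,l),\cdot,\cdot)}$ then follows from the classical Fourier inversion theorem for nilpotent Lie groups (\cite{Kir}, \cite{Co.Gr.}) applied in the $\be$-fibre, since the formula above is precisely its integral form; uniqueness of $f$ follows from the injectivity of the group Fourier transform on $\S(G_\be)$, and continuity of $F\mapsto R(F)$ is a direct consequence of the semi-norm estimates.

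The main obstacle is the Schwartz verification: although the inversion formula is classical for each fixed $\be$, the uniformity in $\be$ requires care because the Vergne polarisations, Malcev bases and Plancherel densities all vary non-trivially with $\be$. The crucial point is that the support of $F$ modulo $G$ sits inside the single open layer $M_I$ and is compact, which prevents the Plancherel density from degenerating and keeps all the smooth data uniformly controlled on $\ol\M$.
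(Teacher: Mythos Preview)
Your approach is essentially the same as the paper's: construct $f$ via the Fourier inversion formula on the Pukanszky cross-section $\Si_{\be,I_{max}}$, with the Plancherel weight, and invoke \cite{Lu-Z} for the Schwartz estimates uniform in $\be$. One small imprecision: the Plancherel density is not the layer-defining function $P_{I_{max}}$ of \eqref{BgstarI} (which is only determined up to a nonvanishing factor) but specifically the Pfaffian $|P_a(\be,l)|$ of the skew matrix $(\langle l,[Z_i,Z_j]_\be\rangle)_{i,j\in I_{max}}$; with that correction your formula coincides with the paper's $f(\be,g)=\int_{\Si_{\be,I_{max}}}\tr(\pi_{(\be,l)}(g)\circ op_{F(\be,l)})\,|P_a(\be,l)|\,dl$.
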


\begin{proof} It suffices to apply the Fourier inversion formula.
For each $F\in \D^\M_M $, let
\begin{eqnarray*}
 f(\be,g)=R(F)(\be,g):=\int_{\Si_{\be,I_{max}}}\tr(\pi_{(\be,l)}(g)\circ op_{F(\be,l)})\vert P_a(\be,l)\vert dl,\, g\in G,
 \end{eqnarray*}
where $P_a(\be,l) $ is the Pfaffian of the polynomial $Q(l)=\det{(\langle l,[Z_i,Z_j]_\be\rangle_{i,j\in I_{max}}}) $.
It follows from \cite{Lu-Z} that the function $f $ is Schwartz and the Fourier inversion theorem tells us that $\pi_{(\be,l)}(f)=op_{F(\be,l)} $ for any $(\be,l)\in\B\ti \g^* $.
\end{proof}

\section{Proof of the retract theorem}\label{proof}

The proof of Theorem \ref{retract} proceeds by induction on the length $\val I $ of the largest index set $I $ for which $(\B\ti \g^*)_I\cap M\ne \es $  and it will be done in several steps.

\subsubsection{ The case $I=\es $}

Suppose that all the elements $(\be,l)\in M $ are characters of $\g_\be $, which means that their index sets are empty.

Let us replace the variable group $(\B,G) $ by the group $(\CC,G)$, where $\CC=\B $ as a manifold, and the multiplications coming from $\CC $ are abelian, i.e. $[U,V]_\ga =0$ for every $U,V\in \g, \ga\in\CC $. We identify now the group $G $ with its Lie algebra and then $U\cdot_\ga V=U+V$ for every $U,V\in \g $ and $\ga\in\CC $.
This also means that $\chi_l$ is a character
on $G_{\ga}= \textrm{exp}_{\ga} \g$, for all $(\ga,l)
\in \CC \ti \g^*$. Now take ${\mathcal M}=M$.
Let $F \in {\mathcal S}(\R^r\ti M)$ be a kernel function with compact support in the variables $(\ga,l) $.
As $\R^r \ti M$ is a sub-manifold of $\R^r \ti \CC \ti \g^*$, the function $F$ may be extended to a
Schwartz function $\widetilde F$ (in the sense of Section \ref{function}
and \ref{kerfunction}) on $\R^r \ti \CC\ti \g^*$ with compact support in the variables $(\ga,l) $. Let
$f := (2\pi)^n{\mathcal F}^{-1}_3 \widetilde F$, where
${\mathcal F}^{-1}_3$ denotes the partial inverse Fourier
transform in the variable $l$ which is the third variable in $\R^r \ti \CC\ti \g^*$. Then $ f
\in {\mathcal S}(\R^r \times \CC \times \g^*)$. For all $(\alpha,(\ga,l)) \in \R^r \ti
{M}$, we have
\begin{eqnarray}
\nn \pi_{(\ga,l)} \big ( f(\alpha,\ga, \cdot) \big ) &=&
\widehat f^3(\alpha, \ga,l) \\
\nn &=& (2\pi)^n {\mathcal F}_3 {\mathcal F}_3^{-1}
F(\alpha,(\ga, l))\\
\nn & =&  F(\alpha, (\ga,l)).
\end{eqnarray}
In particular, $\pi_{(\ga,l)}(f(\alpha,\ga, \cdot)) =0$ if
$(\alpha, (\ga,l)) \in \R^r \ti (M\setminus \CC)$. The continuity
of the map $F \mapsto f$ is obvious. This proves the first  step in
the induction procedure.

\subsubsection{Reducing $\B $}\label{opensubs}

There are two cases where we can reduce the manifold $\B $.
\begin{enumerate}\label{}
\item
Suppose that there exists a smooth function $\va:\B\to \R_+ $ which is not constant on the subset $\B_M:=p_\B(M) $, where $p_\B:\B\ti \g^*\to \B $ is the canonical projection. Let $\be_0\in\B $ such that $\va(\be_0)\in ]a,b[$ for some  $ b>a>0 $ and let $\B_0:=\{\be\in \B; \frac{a}{2}< \va(\be)<2b\} $ and $M_0:=\{(\be,l)\in M; \be\in\B_0\} $.

Suppose that the theorem holds for the pair $(\B_0,M_0)$. Let us show that the result remains true for the pair $(\B,M) $. Let $\M_0 $ be an open relatively compact subset as in the theorem for $(\B_0,M_0 )$. We let $\M:=\{(\be,l)\in M; {a}<\va(\be)<b\}\cap \M_0 $. We will show that $\M $  works for $(\B,M )$. Note that since $\M_0 $ is open in $M_0 $, we have that $\M $ is open in $M $.

Let $F$ be a kernel
function defined on $\R^{r} \ti M \ti G \ti G$ such that
its support in $(\beta,l)$ is contained in $ G\cdot \M \subset M_0$.
By assumption, there exists
$f\in {\mathcal S}(\R^{r} \ti \B_0 \ti G)$ such that
$\pi_{(\beta,l)}(f(\cdot, \beta, \cdot))$ admits $F(\cdot,
(\beta,l),\cdot, \cdot)$ as an operator kernel if $(\beta,l) \in
M_0$. In particular, $\pi_{(\beta,l)}(f(\cdot, \beta,
\cdot))=0$ if $(\beta,l) \in M_0 \setminus G\cdot \M_0$. As $\B_0$
is a sub-manifold of $\B$, we may extend $f$ to a
function in ${\mathcal S}(\R^{r} \ti \B \ti G)$ which
we denote also by $f$.
Choose $\vartheta \in {\mathcal C}^{\infty}_c(\R)$ with
$0\leq \vartheta \leq 1$, $\vartheta \equiv 1$ on $[a,b]$ and
$\vartheta \equiv 0$ on $[0, \frac{a}{2}]\cup [2b,
+\infty[$. We define $\phi \in {\mathcal C}^{\infty}(M)$ by
$\phi(\beta,l):= \vartheta(\va(\be))$.
Then
$\phi\equiv 1$ on $G\cdot \M_0$ and $\phi \equiv 0$ on $M\setminus
G\cdot \M_0$. By taking $g:= \phi \cdot f$, we have that
$\pi_{(\beta,l)}(g(\cdot, \beta, \cdot))=\phi(\beta, l) \cdot
\pi_{(\beta,l)}(f(\cdot,\beta,\cdot))$.

If $(\beta,l) \in \M\subset \M_0 $, then $\pi_{(\beta,l)}(g(\cdot, \beta,
\cdot))=\pi_{(\beta,l)}(f(\cdot, \beta, \cdot))$ and it admits
$F(\cdot, (\beta,l), \cdot, \cdot)$ as an operator kernel. If
$(\beta,l) \in M_0\setminus G\cdot {\M_0}$, then $\pi_{(\beta,l)}(f(\cdot,
\beta, \cdot))=0$ and $\pi_{(\beta,l)}(g(\cdot, \beta, \cdot))=0$.
If $(\beta,l) \in M \setminus M_0$, then $\va(\beta)\in [0, \frac{a}{2}]\cup [2b,
+\infty[$, hence $\phi(\be,l)=0 $
and so $\pi_{(\beta,l)}(g(\cdot, \beta, \cdot))=0$.
Hence the result is true for the function $g$.
\item
If there exists a smooth sub-manifold $\B_0 $ of $\B $ such that $p_\B(M)\subset \B_0 $, then we can apply our theorem to the pair $(\B_0,M) $. Since every smooth function $f_0 $ on $\B_0\ti G $ can be extended to a smooth function $f $ on $\B\ti G $, the retract theorem also holds for  $(\B,M) $.
 \end{enumerate}

\begin{remark}\label{besubm}
\rm   Let $\B$ and $M$ be given as in the statement of the theorem. Let
\begin{eqnarray*}
 p_\B: M \to \B;\quad  p_\B(\be,l)=\be,
 \end{eqnarray*}
 be the canonical projection. If we denote by $M^{max} $ the subset of $M $ consisting of all $(\be,l)\in M $ for which the rank of $dp_\B(\be,l) $ is maximal, then $M^{max} $ is open in $M $ and the subset $p_\B(M^{max}) $ of $\B $ is a smooth sub-manifold of $\B $.
If $p_\B(M^{max}) $ contains at least 2 elements,  by the reasoning in Subsection \ref{opensubs}, using  a non-constant smooth function $\va_0 $ on  $p_\B(M^{max}) $, which can be extended to a smooth function $\va$ of $\B $, we reduce $\B $ to $\B^{max} $ and we can always assume in this way that $p_\B(M) $ is  a smooth sub-manifold of $\B $. If $p_\B(M^{max}) $ is a singleton $\{\be_0\} $, then  $M=M^{max}  $ and $p_\B(M) $ is  obviously a smooth sub-manifold of $\B $.
 \end{remark}

\subsection{Reducing to smoothly varying subspaces depending on $\B $}\label{smoothsub}

Let  $ M \subset \B\ti\g^*$ be a smooth $G $-invariant sub-manifold of $\B \ti \g^*$.
Let us fix the largest  index
\begin{eqnarray*}
 I_M=I=\big ( (j_1, k_1), \cdots, (j_r, k_r)\big )=(j_1,k_1)\times I_1,
 \end{eqnarray*}
where $I_1=((j_2,k_2),\cdots, (j_r,k_r))$ is the index set of $(\be, l\res{g^1(\be,\ell)}) $, such that the open subset $M_I:=(\B\ti \g^*)_I \cap M$ of $M $ is nonempty.
Let $p_\B:M\to \B; (\be, l)\mapsto \be $, be the projection onto the first variable and set
\begin{eqnarray*}
 \B_M:=p_\B(M),
 \end{eqnarray*}
which is  a smooth sub-manifold of $\B $ by Remark \ref{besubm}.

Let $ \cc_1:=\g_{j_{1}+1}= \textrm{span}\{Z_{j_1+1}, \dots, Z_n\}\subset \g$ and let
\begin{eqnarray}\label{nbeonnbondef}
\n_\be^1:= [\g,\cc_1]_\be+[Z_{j_1}, \g_{k_1+1}]_\be\subset \cc_1 \, \text{ for }\, \be\in \B.
\end{eqnarray}
Then, by the definition of the indices $(j_1, k_1) $, we have
\begin{eqnarray}\label{propnbonbbon}
 \n_\be^1\subset \ker {l}\cap\cc_1\subset \a_\be(l) \, \textrm{ if } \, (\be,l)\in (\B\ti \g^* )_{\leq I}.
\end{eqnarray}
It is easy to see  that $\n^1_{\beta}$ is an ideal in $\g$. Let
\begin{eqnarray*}
Z_\be:=[Z_{k_1},Z_{j_1}]_\be \,  \text{ for }\, \be\in \B.
\end{eqnarray*}

 We fix  a scalar product $ \langle{\cdot},{\cdot}\rangle $ on $ \g $ such that $\{Z_1, \dots, Z_n\}$ is an orthonormal basis
 and we identify $ \cc_1^* $ with $ \cc_1 $ by identifying $\sum_{r=j_1+1}^n a_rZ^*_r \in \cc_1^*$ with the element
 $\sum^n_{r=j_1+1} a_r Z_r$ of $\cc_1$. Denote by $\Vert \cdot \Vert_2$ the
 Euclidean norm on $\cc_1$ (and hence on $\cc_1^*$) with
 respect to the given scalar product.
We also identify $$(\n^1_{\beta})^{\perp} := \{q \in \cc_1^*; ~\langle q, \n^1_{\beta}\rangle =\{0\}\}$$
 with a subspace   of $\cc_1$.
For all $\beta \in \B$, we write $\cc_1
 = \n^1_{\beta} \oplus (\n^1_{\beta})^{\perp}$ and define $p_{\beta}$ to be the orthogonal projection of $\cc_1$ onto $(\n^1_{\beta})^{\perp}$.
For each $\beta \in \B$, a generating subset of $\n^1_{\beta}$ is given by
\begin{eqnarray*}
 V(\be)&=&\{v_1(\be),\cdots, v_s(\be)\}\\
&:=&\{[Z_a,Z_{a'}]_\be; a=1,\cdots, n, a'=j_{1}+1,\cdots, n\}\cup\{[Z_b,Z_{j_1}]_\be; b=k_1+1,\cdots,  n\}.
 \end{eqnarray*}
Let
\begin{eqnarray*}
 a_{j,j'}(\be):=\langle{v_j(\be)},{v_{j'}(\be)}\rangle \, \text{ for } \, {1\leq j,j'\leq s}.
 \end{eqnarray*}
Fix $ 0\leq k\leq s$, let $ \I_k=\{J\subset \{1,\cdots, s\}; \val
J=k\}$ and for $ \be\in\B$, let
\begin{eqnarray*}
 h_k(\be):= \sum_{J\in\I_k}\det{\big (\big(a_{j,j'}(\be)\big)_{j,j'\in J}}\big )^2.
 \end{eqnarray*}
It is easy to check that
\begin{eqnarray*}
h_k(\beta) \neq 0 &\Leftrightarrow& v_1(\beta), \dots, v_s(\beta)
\textrm{ have at least rank } k,\\
h_k(\beta)=0 &\Leftrightarrow& v_1(\beta), \dots, v_s(\beta)
\textrm{ have rank }r <k.
\end{eqnarray*}
Let $n_1\in\N $ and  put $f_0:= h_{n_1+1}$ and $f_1:= h_{n_1}$. Let
\begin{eqnarray*}
\B^{\leq n_1} &=& \{ \beta \in \B; \, f_0(\beta)=0\},\\
\B^{\geq n_1} &=& \{\beta \in \B; \, f_1(\beta)\neq 0\},\\
\B^{n_1} &=& \{\beta \in \B; \, f_0(\beta)=0 \textrm{ and } f_1(\beta) \neq 0\}.
\end{eqnarray*}
One sees that $\B^{\geq n_1}$ is open in $\B$, and hence is a
sub-manifold of $\B$.  Again, according to  the reduction argument in Subsection \ref{opensubs} we can assume that $f_1(\be)\ne 0 $ for all $\be\in \B $. On the other hand, let $ n_1:= \max_{\be\in \B_M}\dim{\n^1_\be}$, then we have
\begin{eqnarray}\label{maxdef}
 \nn\B^{\leq n_1}&:=&\{\be\in \B; \, \dim{\n^1_\be}\leq n_1 \},\\
 \nn\B^{n_1}&:=&\{\be\in \B; \, \dim{\n^1_\be}=n_1 \},\\
 \nn\B^{\geq n_1}&:=&\{\beta \in \B; \, \dim{\n^1_\be} \geq n_1\}.
\end{eqnarray}

Note that if we want $\n^1_{\beta}$ to be of fixed dimension and to have
$\n^1_{\beta}$, $(\n^1_{\beta})^{\perp}$ and $p_{\beta}$ to vary
smoothly with respect to $\beta$, we must restrict to $\B^{n_1}$. But in general $\B^{n_1}$ is not a sub-manifold of $\B$. Therefore we must find a smooth sub-manifold inside  $\B^{n_1}$ containing an open subset of the smooth manifold $\B_M=p_\B(M) $.
We have to distinguish the following two cases:

\medskip \noindent {\it Case 1:} If the differential $df_0$ is not identically zero
on $\B_{M} $, we may define
\begin{eqnarray*} \B^{max}_M:=\{\beta \in \B_M; \, df_0(\beta)\neq
0\} \, \, \text{ and } \, \B^{max}:= \{\beta \in \B; \, df_0(\beta)\neq 0\}.
\end{eqnarray*}
By assumption, $\B^{max}$ is a nonempty open subset of $\B$.

\medskip \noindent {\it Case 2:} Assume that $df_0$ is identically zero on $\B_{M}$.

If $\dim{ \B_M}< \dim \B $, we may build a function $\gamma \in
{\mathcal C}^{\infty}(\B)$ such that $\gamma\equiv 0$ on
$\B_{M}$ and $d\gamma$ is not identically zero on
$ \B_M$. We put $\widetilde f_0=f_0 +
\gamma$. Then $\widetilde f_0 \equiv 0$ on $\B_{M}$ and
$d\widetilde f_0$ is not identically zero on ${\B_M}$. We then define
\begin{eqnarray*}
\B^{max}_M:= \{ \beta \in \B_M; \, d\widetilde f_0(\beta)\neq 0\},\,
\B^{max}:= \{ \beta \in \B; \, \widetilde f_0(\be)=0, d\widetilde f_0(\beta)\neq 0\}.
\end{eqnarray*}
By the construction of $\widetilde f_0$, we have again that  $
 \B^{max} $ is an open subset of $\B $ and $\B^{max}_M\subset \B^{max} $.

If $\dim{ \B_M}=\dim\B $, then $ \B_M$ is open in $\B $ and we take a smooth function $\tilde f\ne 0 $ in $\B $ supported on $ \B_M $.
Let
\begin{eqnarray*}
 \B^{max}:= \{ \beta \in \B; \, \widetilde f(\beta)\neq 0\} \, \text{ and } \, \B^{max}_M:= \{ \beta \in \B_M; \,
 \widetilde f(\beta)\neq 0\}.
\end{eqnarray*}
\medskip
In the two cases, the ideals $ \n_\be^1 $
vary smoothly on the smooth sub-manifold $\B^{max}$ of $\B $, since $ \dim {\n_\be^1}=n_1$ on $\B^{max}_M$.
The projection $p_{\beta}$ also varies smoothly on $\B^{max}$.

\begin{remark}\label{nopbzb}
\rm   {According to Remark \ref{besubm}, we can now assume that $$\B=\B^{\rm max}. $$
Furthermore, since  the function $\be\mapsto \no{p_\be(Z_\be)}2^2 $ is now smooth on $\B $, we can take $\be^0\in\B $ and $0< \de< R< \iy$ such that  $\de< \no{p_{\be^0}(Z_{\be^0})}2< R$  and by using the reduction argument, we can then assume that the number $\no{p_\be(Z_\be)}2 $ is contained in the interval $[\de,R] $ for any $\be \in \B$. }
\end{remark}

\subsubsection{On the manifold $M$} \label{nmaxisbm}
Let us focus on the manifold $M$ again. Let
$(\beta_0, l_0) \in M$ be fixed, but arbitrary. There exist
$0< \delta< R< \infty$ such that
$$0<\delta<\min\{\val{\langle l_0, Z_{\beta_0} \rangle}, \Vert
p_{\beta_0} (Z_{\beta_0}) \Vert_2\} < \max\{\val{\langle l_0,
Z_{\beta_0} \rangle}, \Vert p_{\beta_0} (Z_{\beta_0}) \Vert_2\}< R.$$
This is due to the fact that $ M \subset (\B
\ti \g^*)_I$. According to Remark \ref{nopbzb} we can now assume that
$$0<\delta<\min\{\Vert
p_{\beta} (Z_{\beta}) \Vert_2\} < \max\{\Vert p_{\beta} (Z_{\beta}) \Vert_2\}< R$$
for all $\be\in \B $. We define
\begin{eqnarray*}
M^{\delta, R} &=& M^{ red}\\
&:=& \{(\beta,l)\in M; \, 0<\delta<\min\{\val{\langle l,
Z_{\beta} \rangle}\} <
\max\{\val{\langle l, Z_{\beta} \rangle}\} <R\}.
\end{eqnarray*}
Obviously, $M^{ red}$ is open in $M$
and thus is a smooth sub-manifold of $M$.
On the other hand, we define
\begin{eqnarray*}
(\B \ti \g^*)_{\leq I, R, \delta} &=& \Big\{(\beta,l) \in (\B \ti \g^*)_{\leq I}; \\
&{ }& \quad \frac{1}{2} \delta < \min\{ \val{ \langle l, Z_{\beta} \rangle}\}< \max\{
\val{ \langle l, Z_{\beta} \rangle} \} < \frac{3}{2}R\Big\}.
\end{eqnarray*}

\subsubsection{Reducing $M $}

 Now we show that if the retract theorem holds for
$(\B, M^{red})$, then it remains true for $(\B, M)$.

Assume that the result is true for $(\B, M^{ red})$.
Let ${\mathcal M}$ be the open subset in
$M^{ red}$ given by the assumption. We will show that one may take
the same manifold ${\mathcal M}$ for $(\B, M)$ such that the theorem remains true for $(\B,
M)$. As $M^{red}$ is open in $M$, the set ${\mathcal M}$ also has a nonempty interior in $M$.
Moreover, $p_{\B}(\mathcal M) \subset
p_\B(M^{red}) \subset \B^{n_1} \subset \B^{\geq n_1}$. Let
$\emptyset \neq C \subset {\mathcal M}$ be compact and let $F$ be
a kernel function defined on $\R^{r}\ti M\ti G\ti G$ whose support
in $(\beta,l)$ is contained in $G\cdot C$.
The restriction of $F$ to $\R^{r} \ti M^{red} \ti G \ti G$ is a kernel function for
$(\B, M^{red})$.

By assumption, there exists $f \in
{\mathcal S}(\R^{r} \ti \B \ti G)$ such that
$\pi_{(\beta,l)}(f(\cdot, \beta, \cdot))$ admits $F(\cdot,
(\beta,l), \cdot, \cdot)$ as an operator kernel if $(\beta,l) \in M^{red}$.
In particular, $\pi_{(\beta,l)}(f(\cdot, \beta,
\cdot ))=0$ if $(\beta,l) \in M^{red} \setminus C$.
 As $\emptyset \neq C \subset {\mathcal M}$ is
compact, there exist $\delta_1, R_1\in \R_+$ such that
\begin{eqnarray*} 0<\delta<\delta_1 &\leq& \min\{\val{\langle l,
Z_{\beta}\rangle }\}\\
&\leq &\max\{\val{\langle l,
Z_{\beta}\rangle }\}\\
&\leq& R_1<R
\end{eqnarray*}
for all $(\beta,l) \in C$, as $C \subset {\mathcal M} \subset M
\subset (\B \ti \g^*)_I$. Let $u\in {\mathcal C}_c^{\infty}(\R)$ be odd
such that $u\equiv 1$ on $[\delta_1, R_1]$ and $u\equiv 0$ on
$[0,\delta] \cup [R, +\infty[$. There exists $\chi
\in {\mathcal S}(\R)$ such that $\widehat {\chi}=u$. Let us define
a function $\psi$ on $\bigcup_{\beta}\{\beta\}\ti \exp{\R
Z_{\beta}}$ by $\psi(\beta, \exp{sZ_{\beta}}):= \chi(s)$. For
$(\beta,l) \in (\B \ti \g^*)_J$ with $J\leq I$, we have $Z_{\beta} \in
\a_{\beta}(l) \subset \g_{\beta}(l)$ and $\pi_{(\beta,l)}\vert_{\R
Z_{\beta}}(\psi(\beta, \cdot ))= \widehat {\chi}(\langle l,
Z_{\beta} \rangle )\Id_{\mathfrak H(\be,l)} = u( \langle l, Z_{\beta} \rangle )\Id_{\mathfrak H(\be,l)}$.

We define a function $g$ on $\R^r \ti \B\ti G$ by
$$g(\cdot, \beta, \cdot):= f(\cdot, \beta, \cdot ) * \psi(\beta, \cdot).$$
This implies that
$$\pi_{(\beta,l)}(g(\cdot, \beta, \cdot)) =
 u(\langle l, Z_{\beta}
\rangle ) \pi_{(\beta,l)}(f(\cdot, \beta, \cdot)).$$
If $(\beta,l)
\in C$, then  $u(\langle l,
Z_{\beta}\rangle )=1$ and $\pi_{(\beta,l)}(g(\cdot, \beta, \cdot))
= \pi_{(\beta,l)}(f(\cdot, \beta, \cdot))$ admits $F(\cdot,
(\beta,l), \cdot, \cdot)$ as an operator kernel.
If $(\beta,l) \in
M ^{red}\setminus C$, then $\pi_{(\beta,l)}(f(\cdot, \beta,
\cdot))=0$, hence $\pi_{(\beta,l)}(g(\cdot, \beta, \cdot))=0$ and
$F(\cdot, (\beta,l), \cdot, \cdot)=0$.
If $(\beta,l) \in M\setminus M^{red}$, then $\vert \langle l,Z_{\beta}
\rangle \vert \notin [\delta, R]$, i.e. $u(\langle l, Z_{\beta} \rangle
)=0$, which implies that
$\pi_{(\beta,l)}(g(\cdot, \beta, \cdot))=0$.
Hence, the mapping  $F\mapsto g$ satisfies the property of the retract for  $(\B, M)$.

\subsection{Construction of a new variable group}\label{constvar}

\subsubsection{The mapping $\al(\be,l) $}
For $(\beta,l) \in (\B\ti \g^*)_{\leq I}$, we have seen in (\ref{propnbonbbon}) that $\n^1_{\beta} \subset
\ker{l}$. Let  $q:=l\vert_{\cc_1} \in (\n^1_{\beta})^{\perp}$ and
$\langle l, Z_{\beta}\rangle= \langle l,
p_{\beta}(Z_{\beta})\rangle = \langle q,
p_{\beta}(Z_{\beta})\rangle$. For $(\beta,l) \in (\B\ti \g^*)_{\leq
I, R, \delta}$, we have that $\vert \langle l, [Z_{k_1}, Z_{j_1}]_{\beta}
\rangle \vert = \vert \langle l, Z_{\beta} \rangle \vert >
\frac{\delta}{2}>0$ implies $\langle l, [Z_{k_1},
Z_{j_1}]_{\beta} \rangle \neq 0$ and $j_1(\beta,l)=j_1$,
$k_1(\beta,l)=k_1$.

Take an odd function  $ \va\in C^\iy(\R) $  with the properties that $
\va(s)=0 $ for $0\leq  s<\de/4 \textrm{ and } s>2 R  $, $
1>\va(s)>0  $ for $ s\in ]\de/4,\de/2[ \cup ]3R/2,2R[  $ and $
\va(s)=1 $ for $ 3 R/2\geq s \geq \de/2 $.
For every $ (\be,q)\in \B\ti \cc_1^* $, we construct the vector $
\al(\be,q)\in \cc_1\simeq (\cc_1)^* \ $ by
\begin{eqnarray}\label{aldef}
\nn \al(\be,q)&:=&\va(\no{ p_\be(Z_\be)}2)\va({\langle{q},{p_\be(Z_\be)}\rangle}) p_\be(q) \\
 & & \quad +\big(1-\va(\no{ p_\be(Z_\be)}2)\va(\vert{\langle{q},{p_\be(Z_\be)}\rangle}\vert)\big)p_\be(Z_\be).
\end{eqnarray}
Then by the construction, $ \al(\be,q)\in (\n^1_\be)^\perp \subset
\cc_1^* \equiv \cc_1$ for every $ (\be,q)\in \B\ti \cc_1^*$.
On the other hand, for $(\be,l)\in (\B\ti \g^*)_{\leq I,R,\de}$ and $q=l\res{\cc_1} $, we have that
\begin{eqnarray}\label{uptosign}
 \nn\al(\be,q)&=&\va(\no{ p_\be(Z_\be)}2)\va({\langle{q},{p_\be(Z_\be)}\rangle}) p_\be(q)\\
 \nn& & \quad + (1-\va(\no{ p_\be(Z_\be)}2)\va(\vert{\langle{q},{p_\be(Z_\be)}\rangle}\vert))p_\be(Z_\be)\\
 \nn&=&  \pm p_\be(q)+(1-1)p_\be(Z_\be)\\
 &=& \pm q.
\end{eqnarray}
This is due to the fact that $p_{\beta}(q)=q$ as $\n^1_{\beta} \subset
\ker{q}$ for $q=l\vert_{\cc_1}$, if $(\beta,l) \in (\B\ti \g^*)_{\leq I}$.

We will show that
\begin{eqnarray*}
 \langle{\al(\be,q)},{Z_\be}\rangle
 &=&\va(\no{ p_\be(Z_\be)}2)\va({\langle{q},{p_\be(Z_\be)}\rangle})
 \langle{p_\be(q)},{Z_\be}\rangle\\
 &&\quad \quad +(1-\va(\no{ p_\be(Z_\be)}2)\va(\vert{\langle{q},{p_\be(Z_\be)}\rangle}\vert))\no{p_\be(Z_\be)}2^2> 0
\end{eqnarray*}
on $\B\ti \cc_1^* $. In fact, let us first notice that
$\langle p_{\beta}(q), Z_{\beta}\rangle = \langle q,
p_{\beta}(Z_{\beta})\rangle$. As $\varphi$ is an odd function
and $\varphi\geq 0$ on $\R_+$, we have
$$A:= \varphi(\Vert p_{\beta}(Z_{\beta})\Vert_2) \varphi(\langle
q, p_{\beta}(Z_{\beta})\rangle)  \langle p_{\beta}(q),
Z_{\beta}\rangle \geq 0.$$ Since $0\leq \varphi \leq 1$ on $\R_+$,
$$B:= \big (1- \varphi(\Vert p_{\beta}(Z_{\beta})\Vert_2) \varphi
(\vert \langle q, p_{\beta}(Z_{\beta})\rangle \vert ) \big ) \Vert
p_{\beta}(Z_{\beta})\Vert^2_2 \geq 0.$$ If none of the
$\varphi(\cdot)$'s is equal to zero and if $\langle p_{\beta}(q), Z_{\beta}\rangle\ne 0$, then $A>0$.
If $\langle p_{\beta}(q), Z_{\beta}\rangle=0$, then $\varphi(\vert\langle p_{\beta}(q), Z_{\beta}\rangle\vert)=0 $ and so $ B>0$, as now by Remark \ref{nopbzb} $\Vert p_{\beta}(Z_{\beta})\Vert_2>0$.
If one of the $\varphi(\cdot)$'s is equal to zero, then again $B>0$.

For $(\beta,q)\in \B\ti \cc_1^*$,  let
\begin{eqnarray*}
\de(\be,q):=\ad_\be^*(Z_{j_1})\al(\be,q)\in\g^*.
\end{eqnarray*}
We have that
\begin{eqnarray*}
\langle{\de(\be,q)},{Z_{k_1}}\rangle=\langle{\al(\be,q))},{Z_\be}\rangle> 0
\end{eqnarray*}
 and
 \begin{eqnarray*}
\langle{\de(\be,q)},{[\g,\g] _\be}\rangle=\langle{\al(\be,q)},{[[\g,\g]_\be,Z_{j_1}] _\be}
\rangle\subset \langle{\al(\be,q)},{[\g,[\g,Z_{j_1} ]_\be] _\be}\rangle\\
\subset \langle{\al(\be,q)},{[\g,\cc_1] _\be}\rangle\ \subset
\langle{\al(\be,q)},{\n^1_\be}\rangle=\{0\},
\end{eqnarray*}
by the definition of $\alpha(\beta,q)$ in \eqref{aldef}. This means that
$\delta(\beta,q)$ is an algebra homomorphism of $\g_\be=(\g, [ \cdot, \cdot]_{\beta})$ which does not vanish at the vector $Z_{k_1} $. Hence the
subspace
\begin{eqnarray*}
\g^1(\be,q):=\ker{\de(\be,q)}
\end{eqnarray*}
is an ideal of $ \g_\be$ of co-dimension 1 and
\begin{eqnarray}\label{decompo}
 \g=\R Z_{k_1}\oplus \g^1(\be,q).
 \end{eqnarray}
Furthermore $\g^1(\be,q) $ contains $ \cc_1 $ for any $ (\be,q) \in \B\ti \cc_1^*$. In fact,
$$\langle \delta(\beta,q),\cc_1\rangle = \langle \alpha(\beta,q),
[\cc_1, Z_{j_1}]_{\beta}\rangle =0 $$ as $\alpha(\beta,q) \in
(\n^1_{\beta})^{\perp}$ and $[\cc_1, Z_{j_1}]_{\beta} \subset \n^1_{\beta}$.

\subsubsection{ The new variable group $(\B_1,G_1) $}
In order to construct a new variation in the induction procedure, we put
\begin{eqnarray*}
\B_1:=  \B\ti\R\ti \cc_1^*.
\end{eqnarray*}
For $(\be,y,q)\in\B_1 $, we define a Jordan-H\" older basis
\begin{eqnarray*}
 \tilde{\ZZ}^1(\be,y,q)=\{\tilde Z^1_1(\be,q),\cdots, \tilde Z^1_{{n-1}}(\be,q)\}
\end{eqnarray*} of
$\g_1(\beta,y,q)=\ker{\delta(\beta,q)}$ by
\begin{eqnarray*}
\al^{\be,y,q}_{k}=\al^{\be,q}_{k}:=\frac{\langle{\al(\be,q)},{[Z_k,Z_{j_1}]_\be
}\rangle}{\langle{\al(\be,q)},{Z_\be}\rangle}
\end{eqnarray*}
and
\begin{eqnarray*}
\widetilde{\ZZ}^1(\be,y,q)=\widetilde{\ZZ}^1(\be,q)&:=&\{ Z_1-\al^{\be,q}_{1} Z_{k_1}, ,\cdots, Z_{k_1-1}-\al^{\beta,q}_{k_1-1} Z_{k_1}, Z_{k_1+1},\cdots, Z_n\}\\
&=&\{\tilde Z^1_1(\be,q),\cdots, \tilde Z^1_{n-1}(\be,q)\}.\
\end{eqnarray*}
In particular, for $(\be,l)\in (\B\ti\g^*)_{\leq I, R, \delta} $ we have by
(\ref{gonbel}) that
\begin{eqnarray*}
 \g_1(\beta ,y,l\vert_{\cc_1}) = \g_1(\beta,l)=\g^1(\be, l\res{\cc_1}).
 \end{eqnarray*}
In fact, in this case $j_1(\beta,l) = j_1$, $k_1(\beta,l)=k_1$ and
\begin{eqnarray*}
 \g_1(\beta,y, l\vert_{\cc_1}) = \{U \in \g~\vert~\langle
\delta(\beta, l\vert_{\cc_1}),U\rangle=0\}=\{U\in \g~\vert~\langle
l, [U,Z_{j_1}]_{\beta}\rangle =0\} = \g_1(\beta,l),
 \end{eqnarray*}
as $\alpha(\beta, l\vert_{\cc_1})=\varepsilon \cdot l\vert_{\cc_1}$
with $\varepsilon=\pm 1$ if $(\beta,l)\in (\B \ti \g^*)_{\leq I, R, \delta}$.

For each $k$, we also have that
\begin{eqnarray}\label{laequ}
 \al_k^{\be,q}=\frac{\langle{l,[Z_k,Z_{j_1}]_\be
}\rangle}{\langle{l},{Z_\be}\rangle}.
 \end{eqnarray}
 This new basis $\widetilde{\ZZ}^1(\be,y,q) $
 coincides then, up to normalisation, with the basis obtained in Section
 \ref{step}, both procedures and bases generate the same indices.
 Furthermore by \eqref{laequ}, for $(\be,l)\in
(\B\ti\g^*)_{\leq I,R,\de} $, we have
\begin{eqnarray}\label{againeqone}
 \tilde\ZZ^1(\be,y,l_1)=\ZZ^1(\be,l),
 \end{eqnarray}
where $\ZZ^1(\be,l) $ is defined in Section \ref{step} and $l_1= l\vert_{\cc_1}$.

For any $(\be, y, q) \in \B_1$, let us write
\begin{eqnarray*}
[\tilde Z^1_u(\be,q),\tilde Z^1_v(\be,q)] =\sum_{w=1}^{n-1}
\ga(\be,q)^{u,v}_w \tilde Z^1_w(\be,q) \quad \textrm{for } u<v
\textrm{ in }\{1,\cdots,n-1 \}.
\end{eqnarray*}
We obtain in this way a new variable Lie algebra $ (\B_{1}, \g_1)$, where
\begin{eqnarray*}
 \g_1=\R^{n-1},\ \B_1=\B\ti\R\ti\cc_1^*
 \end{eqnarray*}
 and the canonical basis $ \ZZ^1=\{Z^1_1,\cdots, Z^1_{n-1}\} $ of $\g_1$ satisfies, by definition,
\begin{eqnarray*}
[Z^1_u,Z^1_v] _{(\be,q)}=\sum_{w=1}^{n-1} \ga(\be,q)^{u,v}_w
Z^1_{w}, \quad \textrm{ for } u<v \textrm{ in } \{1,\cdots,n-1 \}.
\end{eqnarray*}
This means that the new variable Lie algebra $(\B_1, \g_1)$ with $\g_1 \equiv
\g^1(\beta,q)$ is defined such that $(\g_1, [\cdot,
\cdot]_{(\beta,q)}) \equiv (\g^1(\beta,q), [\cdot, \cdot]_{(\beta,q)})$.

Given $(\beta,l) \in \B\ti \g^*$, let us define
$l_1 \in \g_1^*$ by $l_1(Z^1_i) := l( \tilde Z^1_i(\be,q))$ for all $i\in \{1, \dots, n-1\}$. One has
$l_1(Z^1_i)= l(Z_{i+1})$ if $i \geq k_1$. We also define a map
\begin{eqnarray}\label{iota}
\nn \iota_1:\B\ti \g^*&\rightarrow& \B_1 \ti \g_1^*\\
(\beta,l) &\mapsto& ( (\beta, \langle l,Z_{k_1}\rangle ,\al(\be, l\res{\cc_1})),  l_1),
\end{eqnarray}
where $l_1\equiv l\res{\g^1(\be,l\res{\cc_1})} $. We see that $ \iota_1$ is obviously smooth, injective and even a diffeomorphism onto its image.

Using (\ref{decompo}) we can identify every $l\in \g^* $ with
the pair $(v,l_1) $ where $v:=\langle \ell ,Z_{k_1}\rangle  $ and $l_1:=l\res{\g_1}\equiv l\res{\g^1(\be,l\res{\cc_1})} $.
We can then transfer the natural action of $G $ on $\B\ti \g^* $ to $\B_1\ti \g_1^*$ using the mapping $\iota_1 $. This gives us
 \begin{eqnarray*}
 g\cdot ((\be,v,q),l_1)= ((\be,v+\langle \Ad^*_\be(g)l_1,Z_{k_1}\rangle,\Ad_\be^*(g)q),\Ad^*_\be(g)l_1).
 \end{eqnarray*}
Then we have automatically the relation
\begin{eqnarray*}
 \iota_1(g\cdot (\be,l))=g\cdot(\iota_1(\be,l))
 \end{eqnarray*}
for any $g\in G $ and $(\be,l)\in\B\ti\g^* $.

Consider now the smooth manifold
\begin{eqnarray*}
 (\B\ti\g^*)^0_{\leq I, R, \de} :=\{(\be,l)\in (\B\ti\g^*)_{\leq I,R,\de}; \langle l,Z_{k_1}\rangle=\langle l,Z_{j_1}\rangle=0\}.
 \end{eqnarray*}
Then obviously the smooth manifold $(\B\ti\g^*)_{\leq I,R,\de} $ is diffeomorphic with the manifold $\R^2\ti (\B\ti\g^*)^0_{\leq I,R,\de} $.
The mapping
\begin{eqnarray*}
  \Phi:\R^2\ti (\B\ti\g^*)^0_{\leq I,R,\de} \to (\B\ti\g^*)_{\leq I,R,\de}
 \end{eqnarray*}
 given by
\begin{eqnarray*}
 \Phi(s,t,(\be,l)):=\Big(\be, \Ad^*(\exp{\frac{s}{\langle l,Z_\be\rangle } Z_{j_1}}\exp{\frac{t}{\langle l,Z_\be\rangle} Z_{k_1}})l\Big)
 \end{eqnarray*}
is such a diffeomorphism.
Hence every smooth $G $-invariant sub-manifold $M $ of $(\B\ti\g^*)_{\leq I,R,\de} $ can be decomposed into a direct product of $\R^2 $ with the smooth manifold $M^0 $, where
\begin{eqnarray*}
 M^0:=\{(\be,l)\in M; \langle l,Z_{k_1}\rangle=\langle l,Z_{j_1}\rangle= 0\}.
 \end{eqnarray*}

For $(\beta,l) \in \B\ti \g^*$, one has
$l_1(Z^1_i)=l(Z_i)$ if $i<k_1$ and $l_1(Z^1_i)=l(Z_{i+1})$ if $i\geq k_1$.
We remark that for $(\be,l)  $ and $(\be,l')  $ in $M $ with
$\iota_1(\be,l)=\iota_1(\be,l') $ we have that $l $ and $l' $ have the same restriction to $\g_1(\be,l)=\g_1(\be,l') $, so they are  on the same co-adjoint orbit and $l'=\Ad^*(y)l $ for some $y\in P(\be,l) $ and hence
\begin{eqnarray}\label{opeqq}
 \nn F(\be,l)=F(\be,l')
\end{eqnarray}
by the conditions on the operator fields defined over $M $ given in Definition \ref{kernelf}.

We denote the  new variable Lie group by $\mathbb G_1=(\B_1, G_1)$
where $G_1=(\textrm{exp}_{\beta_1} \g_1)_{\beta_1 \in \B_1}$ and
$\textrm{exp}_{\beta_1} \g_1$ is the connected, simply connected,
nilpotent Lie group associated to the Lie algebra $(\g_1, [\cdot, \cdot ]_{\beta_1})$.

\subsection{Induction step}

\medskip \noi To simplify notations, from now on we will omit the
subscript $\beta$ in the notations of the multiplication and the
exponential map, unless the subscript is crucial for the
understanding. There are two preliminary steps to check.

\subsubsection{Induction hypothesis }

\noi In this subsection, we
will prove the result for $(\B, M)$ using induction.
Let $M_1=\iota_1(M) $ be as constructed in \eqref{iota}.
Let us recall that $l_1=l_1(\beta, l\vert_{\cc_1}) \equiv
l\vert_{\g_1(\beta, l\vert_{\cc_1})}$ for $ (\be,l)\in
(\B\ti\g^*)_{\leq I} $. The Vergne polarisation
$\p(\beta,l)$ for $l$ in $(\g,[\cdot, \cdot]_{\beta})$, obtained
by the procedure of Ludwig-Zahir (see \cite{Lu-Z}, \cite
{Lu.Mo.Sc.}), is also the Vergne polarisation for $l_1$ in $(\g_1, [\cdot, \cdot]_{\beta})$. Let us denote by
$P(\beta,l)=\textrm{exp}_{\beta} \p(\beta,l))$ the corresponding
subgroup. The associated induced representations will be denoted
by $\pi_{(\beta,l)} := \text{ind}^G_{P(\beta,l)} \chi_l$, respectively, $\tilde \pi_{((\beta, l\vert_{\cc_1}),
l_1)}:=\text{ind}^{G_1}_{P(\beta,l)}
\chi_{l_1}$. Then $\pi_{(\beta,l)} \cong \text{ind}^G_{G_1} \tilde \pi_{((\beta, l\vert_{\cc_1}), l_1)}$, as
usual.

Since $M $ is $G $-invariant, the manifold $M_1=\iota_1(M) $ is also $G$-invariant in $\B_1\ti \g_1^* $. Hence we can write $M_1 $ as a direct product manifold $\R^2\ti M_1^0 $, where
\begin{eqnarray*}
 M_1^0:=\{((\be, 0, q),l_1); \langle l_1,Z_{j_1}\rangle=0\}
 \end{eqnarray*}
is $G_1 $ invariant. Note that  $M_1 ^0$ is contained in $(\B_1\ti\g^*_1 )_{I_1}$ and for every $((\be,v,l\res{\cc_1}),l_1)\in M_1 $ we have that $\iy> R>\vert \langle l_1,Z_\be\rangle\vert >\de>0 $.
The induction hypothesis in $\B_1\ti\g_1^* $ and $M_1^0
\subset (\B_1\ti \g_1^*)_{I_1} $ gives us an open relatively compact non-empty  subset  ${\mathcal
M}_1^0$ of  $M_1^0$ with the required properties of the theorem.

 We choose now a relatively compact open subset $\M_1 $ of $M_1 $ such that $\ol{\M_1}\subset  M_1$ and $\M_1 $ is contained in $G\cdot \M_1^0 $. Let
\begin{eqnarray*}
 \M:=\iota_1\inv(\M_1) \quad \text{and} \quad \M^0:=\iota_1\inv(\M_1^0).
 \end{eqnarray*}
Then $\M $ is  non-empty open with its closure $\ol\M $ contained in $M$ and $\M $ is contained in $G\cdot \M_0 $.
We take  a  kernel function $ F\in {\mathcal D}^c_{M}$ such that its support  is contained in $\R^r\ti G\cdot \M\ti G\ti G $.

Given the kernel function $F$, we will now define a kernel
function for the variable group $(\B_1, G_1)$. For simplicity, we
will omit the subscripts $\beta$ or $(\beta, v,l\vert_{\cc_1})$ in
the notations of the multiplication and the exponential map, and
we will identify $g_1, g_1'\in \G_1 =\big ((G_1)_{(\beta_1)}\big
)_{\be_1\in {\mathcal B}_1} \equiv \g_1$ with the
corresponding elements in $ G_1$.
In the following computations, the parameters $\be$ and $(\beta,v, l\vert_{\cc_1})$
will indicate how to multiply group elements or
how to decompose smoothly the group elements, even if it is not
marked explicitly. For $\iota_1(\be,l)=((\be,\langle l,Z_{k_1}\rangle,l\res{\cc_1}), l_1)\in M_1 $, we put
\begin{eqnarray*}
 & &F_1(\alpha, u, t, ((\be,\langle l,Z_{k_1}\rangle,l\res{\cc_1}), l_1), g_1, g'_1) := \\
& & \quad \quad (2
\pi)^{n-j_1+1} \cdot \vert c(\beta, l) \vert\cdot
F(\alpha, (\be,l), \exp{(u+t)X} \cdot g_1, \exp{tX} \cdot g'_1),
 \end{eqnarray*}
for $\al\in\R^r, u,t\in\R$ and $g_1,g_1'\in G_1 $, where
$c{(\beta,l)}:= \langle l, [Z_{k_1}, Z_{j_1}]_{\beta}\rangle \ne 0$ and $X=Z_{k_1}$.
This function $F_1$ has its support
$S_1:=\iota_1(S)$ contained in $G\cdot\M_1 $, and belongs to  $ \D^c_{M_1} $.
The operator field $F_1 $ is smooth on $M_1 $, since  the  mappings $F$ and $c$ are both smooth.

\noi By the induction hypothesis, there exists $h \in {\mathcal
S}( \R^{r+2}, {\mathcal B}_1, G_1)$ such that $\tilde
\pi_{((\beta_1, l\vert_{\cc_1}), l_1)} \big ( h(\alpha, u, t,
\beta_1, \cdot )\big )$ admits $F_1(\alpha, u, t,
(\be_1, l_1), \cdot, \cdot)$ as an operator
kernel for all $(\beta_1,  l_1) \in {M}_1^0$. The
construction of the retract function $f$ will now be done in
several steps.

\subsubsection { Definition of the retract function on the
original group}

For $ (\be, v, q)\in \B_1$, let us first define $\widetilde h$ by
\begin{eqnarray*}
\widetilde h(\alpha, u, t, (\be,v,q), g_1)&:= &\int_{\R} \int_{\cc_1}
h(\alpha, u, t, (\be,v,q), g_1 \cdot \exp{yY} \cdot \exp{Z}) e^{-iq(Z)} dZ dy,
\end{eqnarray*}
where $Y=Z_{j_1}$ and $Z=Z_\be=[X,Y]_\be $ with $X= Z_{k_1}$.
The integral converges, as $h$ is Schwartz in $\g_1$ (for fixed
$\beta_1$), and it is of rapidly decreased in $ q\in(\cc_1)^* $,
because it is  a Fourier transform in $Z $. For all
$(\beta,v,q) \in \B \ti\R\ti \cc_1^*$, we then define
\begin{eqnarray} \nn \widetilde f(\alpha, (\beta,v,q) , \exp{uX} \cdot g_1 \cdot
\exp{yY} \cdot \exp{Z}) = e^{iq(Z)} \int_{\R} \widetilde h(\alpha,
u, t, (\beta,v,q), g_1^{-t}) e^{-ityq([X,Y]_{\beta})}dt
\end{eqnarray}
with $g= \exp{uX} \cdot g_1$ and $g_1^{-t}:= \exp{-tX} \cdot g_1 \cdot \exp{tX}$.
The function $\widetilde f$ is smooth on $\R^r
\times (\B\ti \cc_1^*)\times G$.
As $\widetilde f$ is of rapid decrease
 in $q\in \cc_1^*$ by construction, we may define $f$ by
$$f(\alpha,\beta, g):= \int_{(\cc_1)^*} \widetilde f(\alpha, (\beta,0, q),g)dq, \quad \al\in\R^r, \be\in\B, g\in G.$$
One can see that $f\in {\mathcal S}(\R^r, {\mathcal B}, G)$ (in the sense of Section \ref{function}).

\subsubsection { The retract property}

Let us now compute $\pi_{(\beta,l)}\big
(f(\alpha,\beta,\cdot) \big )$ for $(\beta,l)\in M $. Since the manifold $M $ is contained in $(\B\ti \g^*)_{\leq I,R,\de} $ we have that  $\cc_1 \subset \a_{\beta}(l)
\subset \g_{\beta}(l)$. If we
identify $\exp{\cc_1}$ and $\cc_1$, as well as $\exp{Z}$ and $Z$, for any function $\xi(\beta) \in {\frak
H}_{(\beta,l)}$ (the representation space of $\pi_{(\beta,l)}$)
and any $\tilde g \in G$, we have that
\begin{eqnarray}\label{}
\nn \Big( \pi_{(\beta,l)} \big (f(\alpha,\beta,\cdot) \big)
\xi(\beta) \Big )(\tilde g) &= &\int_{G_{\beta}/\cc_1} \int_{\cc_1}
f(\alpha, \beta, g \cdot Z) \Big ( \pi_{(\beta,l)}(g)
\pi_{(\beta,l)} (Z) \xi(\beta) \Big
)(\tilde g) dZ d\dot g\\
\nn &= &\int_{G_{\beta}/\cc_1} \int_{\cc_1} f(\alpha,\beta, g \cdot Z)
e^{-il(Z)} \Big ( \pi_{(\beta,l)} (g) \xi(\beta) \Big ) (\tilde g) dZ d\dot g\\
\nn &= &  \int_{G_{\beta}/\cc_1} \int_{\cc_1} \int_{(\cc_1)^*}
\tilde f(\alpha, (\beta,0,q), g \cdot Z) e^{-il(Z)} \Big
(\pi_{(\beta,l)}(g) \xi(\beta) \Big)
(\tilde g) dq dZ d\dot g\\
\nn &= &  \int_{G_{\beta}/\cc_1} \int_{\cc_1} \int_{(\cc_1)^*}
\tilde f(\alpha, (\beta,0,q),g) e^{iq(Z)} e^{-il(Z)} \Big (
\pi_{(\beta,l)}(g) \xi(\beta)
\Big ) (\tilde g) dq dZ d\dot g\\
\nn &= & \Big ( \frac{1}{2\pi}\Big )^{n-j_1}\int_{G_{\beta}/\cc_1}
\tilde f(\alpha, (\beta, 0, l\vert_{\cc_1}), g) \Big (\pi_{(\beta,l)}
(g) \xi(\beta) \Big) (\tilde g) d\dot g.
\end{eqnarray}
We use the following smooth decomposition:
$X=Z_{k_1}, \g_1=\g_1(\be, l) $ which gives us
$$\tilde g = \exp{sX} \cdot \tilde g_1 \quad \text{with} \quad
s=s(\tilde g, \beta, l\vert_{\cc_1}), \tilde g_1= \tilde g_1(\tilde
g, \beta, l\vert_{\cc_1}).$$
We then obtain (using the fact that $ \cc_1\subset (\a_\be(l))  $ for all  our $ (\be,l) $'s) that:
\begin{eqnarray*}\label{ } \nn { }&&(2\pi)^{n-j_1} \cdot \Big (
\pi_{(\beta,l)} \big (f(\alpha,\beta, \cdot) \big )\xi(\beta)
\Big )(\exp{sX} \cdot \tilde g_1) \\
\nn &&=~ \int_{G_{\beta}/\cc_1} \tilde f(\alpha, (\beta, 0, l\vert_{\cc_1}), g) \Big (\pi_{(\beta,l)}(g)\xi(\beta) \Big
)(\exp{sX} \cdot
\tilde g_1)d\dot g\\
\nn &&=~ \int_{\R} \int_{G_1/\cc_1} \tilde f(\alpha, (\beta, 0, l\vert_{\cc_1}), \exp{uX}\cdot g_1)\cdot \xi(\beta)(g_1^{-1} \cdot
\exp{(s-u)X} \cdot \tilde g_1) d\dot g_1 du\\
\nn &&=~ \int_{\R} \int_{G_1/\cc_1} \tilde f(\alpha, (\beta, 0, l\vert_{\cc_1}), \exp{uX} \cdot g_1)\cdot  \\
\nn &&  \quad   \tilde \xi(\beta, 0,
l\vert_{z})(s-u)\Big (\big (\exp{-(s-u)X} \cdot g_1 \cdot \exp{(s-u)X} \big )^{-1} \cdot\tilde g_1 \Big) d\dot g_1 du\\
\nn && \text{with } \tilde \xi(\beta,0,l\vert_{\cc_1})(v)(g_1):= \xi(\beta)(\exp{vX}\cdot g_1)\\
\nn &&  =~ \int_{\R} \int_{G_1/\cc_1} \tilde f(\alpha,(\beta, 0, l\vert_{\cc_1}), \exp{(s-r)X}\cdot g_1) \cdot \\
\nn && \quad \tilde \xi(\beta,0,l\vert_{\cc_1})(r) \Big (\big (\exp{-rX} \cdot g_1 \cdot \exp{rX} \big )^{-1} \cdot \tilde g_1 \Big)d\dot g_1 dr \quad \text{with } s- u= r\\
\nn &&  =~ \int_{\R} \int_{G_1/\cc_1} \tilde f(\alpha,
(\beta,0, l\vert_{\cc_1}), \exp{(s-r)X} \cdot g_1^r) \cdot
\tilde \xi(\beta,0,l\vert_{\cc_1})(r)(g_1^{-1} \cdot \tilde g_1) d\dot g_1 dr\\
\nn && \text{with }g_1^r=\exp{rX} \cdot g_1 \cdot \exp{-rX}\\
\nn &&   =~ \int_{\R} \int_{G_1/\cc_1} \tilde f(\alpha, (\beta, 0, l\vert_{\cc_1}), \exp{(s-r)X} \cdot g_1^r) \Big (\tilde
\pi_{((\beta, 0, l\vert_{\cc_1}), l_1)}(g_1)\tilde
\xi(\beta,0,l\vert_{\cc_1})(r) \Big ) (\tilde g_1) d\dot g_1 dr\\
&&(\text{with } l_1\equiv l\vert_{\g_1})\\
\nn &&   =~ \int_{\R} \int_{G_1/\cc_1} \int_{\R} \tilde h(\alpha,
s-r, t, (\beta, 0, l\vert_{\cc_1}), g_1^{r-t}) \Big ( \tilde
\pi_{((\beta, 0, l\vert_{\cc_1}), l_1)}(g_1) \tilde
\xi(\beta,0,l\vert_{\cc_1})(r) \Big )(\tilde g_1) dt d\dot g_1 dr \\
\nn &&   =~ \int_{\R} \int_{G_1/\exp{\R Y}\cdot \cc_1} \int_{\R}
\int_{\R} \tilde h(\alpha, s-r, t, (\beta, 0, l\vert_{\cc_1}), w_1^{r-t}\cdot (\exp{yY}^{r-t}) \\
\nn &&   \quad    \Big ( \tilde
\pi_{((\beta, 0, l\vert_{\cc_1}), l_1)} (w_1)\tilde
\xi(\beta,0,l\vert_{\cc_1})(r) \Big )(\tilde g_1) dt dy d\dot w_1 dr\ (
 \text{as } l(Y)=0,\text{ for } l_1
\equiv l\vert_{\g_1(\beta, 0, l\vert_{\cc_1})})\\
\nn &&   =~\int_{\R} \int_{G_1/\exp{\R Y}\cdot \cc_1} \int_{\R}
\int_{\R} \tilde h(\alpha, s-r,t, (\beta, 0, l\vert_{\cc_1}), w_1^{r-t}
\cdot \exp{yY}) \cdot e^{i(r-t) c(\beta,  l)y}\\
\nn &&  \quad     \Big ( \tilde
\pi_{((\beta, 0, l\vert_{\cc_1}), l_1)} (w_1) \tilde
\xi(\beta,0,l\vert_{\cc_1})(r) \Big )(\tilde g_1) dtdyd\dot w_1 dr\\
\nn && (\text{as }\big (\exp{yY}\big)^{r-t}=\exp{yY + y(r-t)[Z_{k_1}, Z_{j_1}]_{\beta}}, \, \text{by covariance of }  \tilde h, \text{ with } \\
\nn &&   c(\beta, l)= \langle l, [Z_{k_1}, Z_{j_1}]_{\beta}\rangle \neq 0 \text{ as before})\\
 \nn &&   =~\frac{1}{\vert
c(\beta, l)\vert} \int_{\R} \int_{G_1/\exp{\R Y}\cdot
\cc_1} \int_{\R} \int_{\R} \tilde h(\alpha, s-r, t, (\beta, 0, l\vert_{\cc_1}), w_1^{r-t} \cdot \exp{c(\be,l)^{-1} \tilde yY} )
e^{ir\tilde y} e^{-it \tilde y} \\
\nn &&   \quad    \Big ( \tilde \pi_{((\beta, 0, l\vert_{\cc_1}), l_1)}
(w_1) \tilde \xi(\beta,0,l\vert_{\cc_1})(r) \Big ) (\tilde g_1) dt
d\tilde y d\dot w_1 dr \quad  (\text{with } \tilde y = c(\be,l) y)\\
\end{eqnarray*}
\begin{eqnarray*}
\nn &&   =~\frac{1}{\vert c(\beta, l)\vert} \int_{\R}
\int_{G_1/\exp{\R Y}\cdot \cc_1} \int_{\R} \int_{\R}
\int_{\R} \int_{\cc_1}h(\alpha, s-r, t, (\beta, 0, l\vert_{\cc_1}), w_1^{r-t} \\
&&  \quad \cdot \exp{c(\beta, l)^{-1} \tilde yY} \cdot
\exp{y'Y} \cdot Z)\cdot e^{-il(Z)} e^{ir\tilde y} e^{-it \tilde y}\\
\nn &&  \quad \Big ( \tilde \pi_{((\beta, 0, l\vert_{\cc_1}), l_1)}(w_1)
\tilde \xi(\beta,0,l\vert_{\cc_1})(r) \Big ) (\tilde g_1) dZ dy' dt d\tilde y d\dot w_1 dr\\
\nn &&   =~\frac{1}{\vert c(\beta, l)\vert} \int_{\R}
\int_{G_1/\exp{\R Y}\cdot \cc_1} \int_{\R} \int_{\R} \int_{\R}
\int_{\cc_1} h(\alpha, s-r, t, (\beta, 0, l\vert_{\cc_1}), w_1^{r-t} \cdot \exp{y''Y} \cdot Z)\\
\nn &&  \quad     \cdot e^{-il(Z)}
e^{ir\tilde y} e^{-it \tilde y} \Big ( \tilde \pi_{(\be_1, l_1)}(w_1) \tilde \xi(\beta,0,l\vert_{\cc_1})(r) \Big )
(\tilde g_1) dZ dy'' dt d\tilde y d\dot w_1 dr\   (\text{for } y''= y'+ c(\beta, l)^{-1} \tilde y)\\
\nn &&   =~\frac{1}{2\pi} \cdot\frac{1}{\vert c(\beta, l)\vert} \int_{\R} \int_{G_1/\exp{\R Y}\cdot \cc_1}
\int_{\R} \int_{\cc_1} h(\alpha, s-t, t,(\beta, 0, l\vert_{\cc_1}), w_1 \cdot \exp{y''Y} \cdot Z)\\
\nn &&  \quad     \cdot e^{-il(Z)} \Big ( \tilde \pi_{((\beta, 0, l\vert_{\cc_1}), l_1)}(w_1) \tilde
\xi(\beta,0,l\vert_{\cc_1})(t) \Big ) (\tilde g_1) dZ dy'' dt d\dot w_1\\
\nn &&   =\frac{1}{2\pi}\cdot \frac{1}{\vert c(\beta, l)\vert} \int_{\R} \tilde
\pi_{((\beta, 0, l\vert_{\cc_1}), l_1)} \big ( h(\alpha, s-t, t,
(\beta, 0, l\vert_{\cc_1}), \cdot ) \tilde \xi(\beta,0,l\vert_{\cc_1})(t) \big ) ( \tilde g_1) dt\\
\nn &&  \text{ as } e^{-il(Z)} \tilde \pi_{((\beta, 0, l\vert_{\cc_1}), l_1)} (w_1) =
\tilde \pi_{((\beta, 0, l\vert_{\cc_1}), l_1)} (w_1 \cdot \exp{y''Y}\cdot Z).
\end{eqnarray*}

\medskip
Let us finish the computation for $(\beta, l) \in M$. It suffices  to take $(\be,l)\in  M^{0}$. Then
$((\beta, 0, l\vert_{\cc_1}), l_1) \in M_1^0$ and by the induction hypothesis,
\begin{eqnarray}\label{ }
\nn { }&&\Big ( \pi_{(\beta,l)} \big (f(\alpha,\beta, \cdot) \big ) \xi(\beta) \Big )(\tilde g) \\
\nn { }&&  =~\Big ( \pi_{(\beta,l)} \big
(f(\alpha,\beta, \cdot) \big )\xi(\beta) \Big )(\exp{sX} \cdot \tilde g_1) \\
\nn && =~ \Big
(\frac{1}{2\pi}\Big)^{n-j_1+1} \cdot \frac{1}{\vert c(\beta, l)\vert} \int_{\R} \int_{G_1/P(\beta,l)} F_1(\alpha,
s-t, t, ((\beta, 0, l\vert_{\cc_1}), l_1), \tilde g_1, g_1)\\
\nn &&     \quad      \tilde \xi(\beta,0,l\vert_{\cc_1})(t)(g_1)d\dot g_1 dt\\
 \nn &&   =~
\int_{\R} \int_{G_1/P(\beta,l)} F(\alpha,(\beta,l), \exp{sX} \cdot \tilde g_1,
\exp{tX} \cdot g_1)\xi(\beta)(\exp{tX}\cdot g_1) dg_1 dt \\
\nn &&   =~ \int_{G/P(\beta,l)} F(\alpha,(\beta,l), \tilde g, g)\xi(\beta)(g) dg.
\end{eqnarray}
Hence for every $(\be,l)\in M $, we have the required result.

The algorithm used to build the retract function $f$ respects the
semi-norms defining the topology of our function spaces. So the
retract map $F\mapsto f$ is continuous.

\section{$\bf G$-prime ideals in $L^1(G) $}\label{primeideals}

In this section, we will study the structure of the $A $-prime ideals in $L^1(G) $ by using the retract theorem.

\subsection{ A retract defined on closed orbits}
Let $\bf G$ be a  Lie group  of automorphisms of a connected, simply
connected, nilpotent Lie group $G=\exp{\g}$ containing the inner automorphisms of $G $. For instance take any simply connected Lie group $\bf G $ and let $G $ be the nilradical of $\bf G $.

Let $l_0\in \g^*$ be fixed, we consider the orbit $\Om=\Omega_{l_0}:= {\bf G} \cdot l_0$ in $\g^* $, let $O=O_{l_0} $ be the $G $-orbit of $l_0 $. We assume that $\Om $ is locally closed in $\g^*$. In particular we can write
\begin{eqnarray*}
 \Om=\ol{\Om}\cap U,
\end{eqnarray*}
where $\ol{\Om} $ denotes the closure of $\Om $ in $\g^* $ and $U $ is  $\bf G $-invariant open subset of $\g^* $. It is then a
 $\bf G $-invariant smooth  sub-manifold of $\g^* $ diffeomorphic to the manifold $\textbf{G}/ {\textbf{G}}_{l_0} $, where $\textbf{G}_{l_0} $ denotes the stabiliser  $\textbf{G}_{l_0}:=\{\al\in \textbf{G}; \al\cdot l_0=l_0\}.
 $ The $\bf G $-orbit ${\bf G}\cdot (G\cdot l_0) $ in the orbit space $\g^*/G $ is then locally closed and homeomorphic to the quotient ${\bf G}/{\bf G}_{O} $, where ${\bf G}_O $ is the stabiliser of the set $O $ in $\bf G $. In fact, we have that ${\bf G}_O=G \cdot{\bf G}_{l_0}$.

For a Jordan-H\"older basis $\ZZ=\{Z_1,\cdots, Z_n\} $ of $\g $ and  ${\bf g}\in {\bf G}$, let
\begin{eqnarray*}
 {\bf g\cdot \ZZ}:=\{\Ad({\bf g})Z_1,\cdots, \Ad({\bf g})Z_n\},
 \end{eqnarray*}
 which is again  a Jordan-H\"older basis of $\g  $.
For every index set $I$, we have the following relation (see \cite{Lu.Mo.Sc.}):
\begin{eqnarray}\label{shiI}
 \Ad^*({\bf g})\g^*_{I,{\bf g}\cdot \ZZ}=\g^*_{I,\ZZ},\quad {\bf g}\in {\bf G}.
 \end{eqnarray}
For an index set $I $ and a Jordan-H\"older basis $\ZZ $ of $\g $, recall that
\begin{eqnarray*}
 \s_I:=\sum_{i\in I} \R Z_{i}^*, \quad  \Si_{I,\ZZ}:=\s_I\cap \g^*_{I,\ZZ},
 \end{eqnarray*}
and the mapping $ E_{I}:\R^d\ti \Si_{I,\ZZ}\to \g^*_{I,\ZZ}$ is given by
\begin{eqnarray*}
E_I(s_1,t_1,\cdots, s_r,t_r; l):= \Ad^*(\exp{s_1Z_{j_1}}\exp{t_1 Z_{k_1}}\cdots \exp{s_r Z_{j_r}}\exp{t_r Z_{k_r}}) l.
 \end{eqnarray*}
We have that $E_I$ is a bijection and $E_I(\R^d\ti \{l\}) $ is the $G $-orbit of $l $.
Let
\begin{eqnarray*}
 \Upsilon: \g^*_{I,\ZZ}\to \Si_{I,\ZZ}; \quad \Upsilon(l):=\Ad^*({\bf G})l\cap \Si_{I,\ZZ}= p_{\Si_{I,\ZZ}}(E_I\inv(l)),
 \end{eqnarray*}
where $p_{\Si_{I,\ZZ}} $ is the projection of $\R^d\ti \Si_{I,\ZZ}$ onto $\Si_{I,\ZZ} $.

For the orbit $\Omega $, we need to construct a finite partition of unity $(\ps_i)_{i\in \GA}  $ consisting of smooth $G $-invariant functions  $\ps_i: \Om \to \R_+$ such that for every $i\in \GA $ the support of each function $\ps_i $ is contained in an open subset of $\g^*_{I,{\bf g_i}\cdot \ZZ} $ for some ${\bf g}_i\in {\bf G}$.
In order to do that let $\va: \R \to \R_+$ be a smooth function with compact support which  vanishes in a neighbourhood of 0. We define a function $\ps: \g^*_{\leq I}\to \R_+ $ by
\begin{eqnarray*}
 \ps(l):= \va(P_I(\Upsilon(l)) \, \text{ if } l\in\g^*_I \quad \text{and} \quad \ps(l):= 0 \, \text{ if } l\in \g^*_{<I},
 \end{eqnarray*}
where $P_I $ is a smooth function on $\B \ti \g^*$ defined in Section \ref{step}. We see that $\ps $ is smooth (since $\va $ vanishes in a neighbourhood of 0) and is $G $-invariant by the construction.
Let
\begin{eqnarray*}
 U_{I, \ZZ}:= \{l\in \Om; \ps(l)\ne 0\}.
\end{eqnarray*}
Now assume that
 $\g^*_I= \g^*_{I,\ZZ}$ be the maximal layer with respect to $\ZZ $ such that
$\Om\cap \g^*_{I,\ZZ}\neq \emptyset$. We have that
$\Om\cap \g^*_{I,{\bf g}\cdot \ZZ}\neq \emptyset $ and $\Om\cap \g^*_{I',{\bf g}\cdot \ZZ}= \emptyset $ for ${\bf g}\in {\bf G}$ and $I'> I $. Moreover, $U_{I,\ZZ}$ is a non-empty open subset of $\Om $ contained in $\g^*_{I} $ and
\begin{eqnarray*}
 \Om\subset \bigcup_{{\bf g\in G}}\Ad^*({\bf g})U_{I, \ZZ}.
\end{eqnarray*}
Let $C $ be  a compact subset of $\g^* $ contained in $\Om $, then there exists a finite subset $\GA\subset \bf G $ such that
\begin{eqnarray*}
 C\subset \bigcup_{{\bf g}\in \GA}\Ad^*({\bf g})U_{I,\ZZ}.
 \end{eqnarray*}
Hence there is  a  finite partition of unity $(\ps_i)_{i\in \GA}  $ consisting of smooth  $G $-invariant functions  $\ps_i: \Om \to \R_+$ such that the support of each function $\ps_i $ is contained in $\Ad^*({\bf g}_i)U_{I,\ZZ}\subset \g^*_{I,{\bf g}_i\cdot \ZZ} $ for every ${\bf g}_i\in \GA $.

Suppose we have a smooth adapted operator field $F $ on $\Om $ supported on $G\cdot C $, we can write
\begin{eqnarray*}
 F=\sum_{i\in \GA} \ps_i F.
 \end{eqnarray*}
According to the retract theorem, for each $i\in \GA$ there is a (retract) Schwartz function $f_i $ on $G $ such that
\begin{eqnarray*}
 \pi_{l}(f_i)=op_{\ps_iF(l)}
 \end{eqnarray*}
for every $l\in\Om $. For $f:=\sum_{i\in \GA} f_i $, we have that
\begin{eqnarray*}
 \pi_{l}(f)&=&\sum_{i\in \GA} \pi_{l}(f_i)\\
\nn  &= & \sum_{i\in \GA}\psi_i op_{F(l)}\\
\nn  &= & op_{F(l)}.
 \end{eqnarray*}
Hence for every smooth adapted kernel function supported on $G\cdot C $, we have build a retract function.

\subsection{$\bf G$-prime ideals}\label{prime1}

Let us first recall the definition of $\bf G$-prime ideals.

\begin{definition}
{\rm A two-sided  closed ideal $I$ in $\l1G $ is called  $\bf G$-prime, if $I$ is
$\bf G$-invariant and if, for all $\bf G$-invariant two-sided ideals $I_1$ and $I_2$
of $L^1(G)$, the following implication holds
$$I_1 * I_2 \subset I \Rightarrow I_1 \subset I \text{ or } I_2 \subset I.$$}
\end{definition}

Denote by $Prim^*(G) $ the collection of all the kernels of irreducible unitary representations of $L^1(G) $.
For  a closed subset $C $ of $Prim^*(G) $, let
\begin{eqnarray*}
 \ker C:= \bigcap_{P\in C}P.
 \end{eqnarray*}
For a subset $I $ of $\l1G $, denote by $h(I) $ the subset
\begin{eqnarray*}
 h(I):= \{P\in Prim^*(G); I \subset P\}.
 \end{eqnarray*}
The set $h(I) $ is then closed in $Prim^*(G) $ with respect to the Fell  topology.

We have the following result for $\bf G$-prime ideals of $L^1(G)$ which can be viewed as an application of the retract theorem.

\begin{theorem}\label{prime}

Let $G $ be a simply connected, connected nilpotent Lie group and let $\bf G $ be a Lie group of automorphisms of $G $ containing the inner automorphisms, which acts smoothly on the group $G $, such that every $\bf G $-orbit in $\g^* $ is locally closed.
If $I$ is a proper $\bf G$-prime ideal of $L^1(G)$, then
there exists an $\bf G$-orbit $\Omega_{l_0}$ in $\g^*$ such that
$$I= \ker{\Omega_{l_0}}.$$
Moreover, all the kernels of $\bf G$-orbits are $\bf G$-prime ideals.
\end{theorem}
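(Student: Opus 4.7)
The plan is to handle the two assertions separately, with the main direction---that every proper $\bf G$-prime ideal is of the form $\ker{\Om}$---relying crucially on the Retract Theorem through the density of Schwartz functions in $\ker{\Om}$ announced in the introduction.

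For the easier direction (kernels of $\bf G$-orbits are $\bf G$-prime), let $\Om = \Om_{l_0}$ and suppose $I_1, I_2$ are $\bf G$-invariant closed two-sided ideals of $L^1(G)$ with $I_1 * I_2 \subset \ker{\Om}$. Fix $l \in \Om$. Since $G$ is nilpotent and hence type I, $\pi_l(C^*(G)) = \K(\H_{\pi_l})$, so the closures of $\pi_l(I_1)$ and $\pi_l(I_2)$ are closed two-sided ideals of $\K(\H_{\pi_l})$ whose product vanishes; irreducibility forces one of them to be $\{0\}$, so $I_1 \subset \ker{\pi_l}$ or $I_2 \subset \ker{\pi_l}$. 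The sets $A_j := \{l \in \Om : I_j \subset \ker{\pi_l}\}$ are $\bf G$-invariant because $\pi_{{\bf g}\cdot l}$ is unitarily equivalent to $\pi_l \circ {\bf g}^{-1}$ and each $I_j$ is $\bf G$-invariant. Since $A_1 \cup A_2 = \Om$ and $\Om$ is a single $\bf G$-orbit, one of them coincides with $\Om$, giving $I_j \subset \ker{\Om}$.

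For the main direction, let $I$ be a proper $\bf G$-prime ideal. Its hull $h(I) \subset Prim^*(G)$ is non-empty, closed, and $\bf G$-invariant. Via Kirillov's parametrisation $Prim^*(G) \cong \g^*/G$, it corresponds to a closed $\bf G$-invariant union of $G$-orbits in $\g^*$. I first show that primality forces $h(I) = \ol{\Om_{l_0}}$ for a single $\bf G$-orbit $\Om_{l_0}$: if $h(I)$ split as $C_1 \cup C_2$ with $C_1, C_2$ proper relatively closed $\bf G$-invariant subsets, then $J_j := \ker{C_j}$ would be $\bf G$-invariant ideals strictly containing $I$ with $J_1 * J_2 \subset \ker{h(I)} = I$ (the last equality by $*$-regularity of $L^1(G)$ for nilpotent $G$), contradicting primality. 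Local closedness of $\Om_{l_0}$ is essential here: it lets us view $\Om_{l_0}$ as a smooth $G$-invariant sub-manifold of $\g^*$ separated from $\ol{\Om_{l_0}} \setminus \Om_{l_0}$ by a $\bf G$-invariant open set, so that the Retract Theorem applies.

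Finally, to upgrade $I \subset \ker{\Om_{l_0}}$ to equality, I apply the Retract Theorem together with the partition-of-unity construction of Subsection 5.1 to produce, for every smooth adapted operator field $F$ on $\Om_{l_0}$ with $G$-compact support, a Schwartz function $f \in \S(G)$ with $\pi_l(f) = op_{F(l)}$ for all $l \in \Om_{l_0}$. Standard approximation then yields the density of Schwartz functions contained in $\ker{\Om_{l_0}}$ inside the $L^1$-kernel, as advertised in the introduction. Combining this density with the methods of \cite{Lu.Mo.1} gives $\ker{\Om_{l_0}} \subset I$, completing the proof. The main obstacle is the density step: the Retract Theorem only furnishes retracts for operator fields supported in $G \cdot \M$ for a fixed relatively compact $\M \subset \Om_{l_0}$, so one must patch local retracts via the partition of unity to cover arbitrary $G$-compact subsets of $\Om_{l_0}$ and then approximate general elements of $\ker{\Om_{l_0}}$ by functions whose operator kernels have such $G$-compact support, carefully controlling the behaviour across the boundary $\ol{\Om_{l_0}} \setminus \Om_{l_0}$.
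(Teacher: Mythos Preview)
Your treatment of the easier direction and of the final step (density of Schwartz functions in $\ker{\Om}$ via the Retract Theorem, then invoking \cite{Lu.Mo.1} to deduce $\ker{\Om}\subset I$) matches the paper's approach. The paper makes the mechanism of the last step more explicit: density implies $\ker{\Om}^N\subset J(\Om)$ for some $N$, where $J(\Om)$ is the minimal closed ideal with hull $\ol\Om$; since $J(\Om)\subset I$, primality of $I$ then yields $\ker{\Om}\subset I$.

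There is, however, a genuine gap in your argument that $h(I)$ is the closure of a single $\bf G$-orbit. You write $J_1*J_2\subset \ker{h(I)}=I$ ``by $*$-regularity of $L^1(G)$''. This is not what $*$-regularity says. $*$-regularity of $L^1(G)$ (known for nilpotent $G$) means that the hull-kernel topology on $Prim^*(G)$ computed via $L^1(G)$ agrees with that computed via $C^*(G)$; equivalently, $h(\ker{C})=C$ for every closed $C\subset Prim^*(G)$. It does \emph{not} assert $\ker{h(I)}=I$ for an arbitrary closed ideal $I$; that would be global spectral synthesis, which already fails for $L^1(\R^n)$ with $n\geq 3$. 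Without $\ker{h(I)}=I$ you only obtain $J_1*J_2\subset \ker{h(I)}\supset I$, and primality of $I$ gives nothing.

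The paper does not attempt a direct argument here; it cites \cite[Theorem~1.2.12]{Lu-Mo.2} for the fact that the hull of a $\bf G$-prime ideal is the closure of a $\bf G$-orbit. If you want to argue directly, you need to use the local closedness of $\bf G$-orbits and the density/minimal-ideal machinery already in play rather than an appeal to $*$-regularity. One route: pick any $\bf G$-orbit $\Om\subset h(I)$; by the Retract Theorem and the density argument one gets $\ker{\Om}^N\subset J(\ol\Om)\subset I$, hence $\ker{\Om}\subset I$ by primality, so $h(I)\subset h(\ker{\Om})=\ol\Om$, forcing $h(I)=\ol\Om$.
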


\begin{proof}
For any ${\bf G}$-orbit $\Om $ in $\g^*$, the retract theorem tells us that the Schwartz functions contained in $\ker {\Om} $ are dense in $\ker \Om $ (see \cite[proof of Proposition 4.1]{Lu.Mo.1} and \cite{La.Mo.1}). From the proof of \cite[Theorem 1.2.12]{Lu-Mo.2},
it follows that the hull of a prime ideal $I $ is the closure of an ${\bf G}$-orbit in $Prim^*(G)\simeq \widehat G $. On the other hand,
the density of $\S(G)\cap \ker \Om $ implies that $\ker \Om ^N$ is contained in the minimal ideal $J(\Om) $ with hull $\Om $ for some $N\in \N$. This tells us that $\ker\Om^N\subset J(\Om)\subset I $, since the minimal ideal  with hull $\Om $ is contained in every ideal with hull $\Om $. Since $I $ is ${\bf G}$-prime, we have that $I=\ker\Om $.

Obviously the ideal $\ker \Om $ is ${\bf G}$-prime for any $\textbf{G} $-orbit $\Om $ in $\g^* $. To see this, let $I_1$ and $I_2 $ be two ${\bf G}$-invariant ideals of $\l1G $ such that $I_1\ast I_2\subset \ker \Om $. This means that $I_1\ast I_2\subset \ker\Om \subset \ker{\pi_l} $ for some $l \in \Om $. We have then either $I_1 $ or $I_2 $ is contained in $\ker{\pi_l} $, since $\pi_l $ is irreducible. But if $I_1 $ is contained in $\ker {\pi_l} $, it is also contained in $\ker{ \pi_{k\cdot l}} $ since $I_1 $ is $\bf G $-invariant. Hence $I_1\subset \ker\Om $ and the proof is complete.
\end{proof}

\bigskip

\noi Ying-Fen Lin, {\it Pure Mathematics Research Centre,
Queen's University Belfast,
 Belfast, BT7 1NN, U.K.
E-mail: y.lin@qub.ac.uk}

\medskip
 \noi Jean Ludwig, {\it  Universit\'e de Lorraine,
Institut Elie Cartan de Lorraine,
UMR 7502, Metz, F-57045, France.
E-mail: jean.ludwig@univ-lorraine.fr}

\medskip
\noi Carine Molitor-Braun, {\it  Unit\'e de Recherche en
Math\'ematiques, Universit\'e du Luxembourg, 6, rue
Coudenhove-Kalergi, L-1359 Luxembourg, Luxembourg.
E-mail: carine.molitor@uni.lu}


\begin{thebibliography}{WWW99}



\bibitem{Co.Gr.} L.~Corwin and P.~Greenleaf, {Representations
of nilpotent Lie groups and their applications. Part 1:
 Basic theory and examples}, Cambridge Studies in Advanced Mathematics, 18 (1990).

\bibitem {Ho} R.~Howe, {On a connection between
nilpotent groups and oscillatory integrals associated to
singularities}, \emph{Pacific J. Math.} \textbf{73} (1977), no. 2, 329-363.

\bibitem {Kir} A.~A.~Kirillov, Unitary representations of nilpotent Lie groups, \emph{Uspekhi Mat. Nauk.} \textbf{17} (1962), 53-104.


\bibitem{La.Mo.1} R.~Lahiani and C.~Molitor-Braun,
Compact actions, retract theory and prime ideals, \emph{Illinois J. Math.} \textbf{55}, no. 3 (2011), 1235-1266.

\bibitem {Le.Lu.} H.~Leptin, J.~Ludwig, {\em Unitary representation
theory of exponential Lie groups}, De Gruyter Expositions in Mathematics 18, 1994.


 \bibitem {Lu.} J.~Ludwig, On Primary Ideals in the Group
 Algebra of a Nilpotent Lie Group, \emph{Math. Ann.} {\bf 262} (1983), 287-304.


\bibitem{Lu.Mo.1} J.~Ludwig and C.~Molitor-Braun,
Exponential actions, Orbits and their Kernels, \emph{Bull.
Austral. Math. Soc.} {\bf 57} (1998), 497-513.

\bibitem{Lu-Mo.2} J.~Ludwig, C.~Molitor-Braun, Repr\'esentations irr\'eductibles born\'ees des groupes de
Lie exponentiels, \emph{Canad. J. Math.} {\bf 53}, no. 5 (2001), 944-978.

\bibitem {Lu.Mo.Sc.} J.~Ludwig, C.~Molitor-Braun and
L.~Scuto, On Fourier's inversion theorem in the context of
nilpotent Lie groups, \emph{Acta Sci. Math. (Szeged)} \textbf{73} (2007), 547-591.

\bibitem {Lu.Mu.} J.~Ludwig and D.~M\" uller,
Sub-Laplacians of Holomorphic $L^p$-type on Rank One $AN$-Groups
and Related Solvable Groups, \emph{J. Func. Anal.} \textbf{170} (2000), 366-427.

\bibitem {Lu-Z} J.~Ludwig and H.~Zahir, On the nilpotent
$*$-Fourier transform, \emph{Lett. Math. Phys.} \textbf{30} (1994), 23-34.

\bibitem{P1} D.~Poguntke, \"{U}ber das Synthese-Problem f\"{u}r
nilpotente Liesche Gruppen, \textit{Math. Ann.} \textbf{269} (1984), 431-467.


\end{thebibliography}
\end{document}